\documentclass[11pt]{article}

\usepackage[margin=1in]{geometry}
\usepackage{amsmath,amssymb,mathtools}
\usepackage{booktabs,tabularx,array}
\usepackage{enumitem}
\usepackage{xcolor}
\usepackage{microtype}
\usepackage[authoryear,round]{natbib}
\usepackage{hyperref}
\usepackage[nameinlink,capitalise]{cleveref}
\usepackage{amsthm}

\newtheorem{theorem}{Theorem}[section]
\newtheorem{lemma}[theorem]{Lemma}

\theoremstyle{definition}

\theoremstyle{remark}
\newtheorem{remark}[theorem]{Remark}

\hypersetup{
  colorlinks=true,
  linkcolor=blue!55!black,
  citecolor=blue!55!black,
  urlcolor=blue!55!black,
  pdftitle={A New Impossibility Region for the 5 x 5 Symmetric Nonnegative Inverse Eigenvalue Problem},
  pdfauthor={}
}

\usepackage{setspace}
\newcommand{\tr}{\operatorname{tr}}
\newcommand{\Spec}{\operatorname{Spec}}
\newcommand{\diag}{\operatorname{diag}}

\newcommand{\R}{\mathcal R}
\newcommand{\Wreg}{\mathcal W}
\newcommand{\T}{^{\mathsf T}}
\newcommand{\eps}{\varepsilon}
\title{\textbf{A New Impossibility Region for the $5\times5$ Symmetric Nonnegative Inverse Eigenvalue Problem}}
\author{Jiashun Jin,\footnote{For correspondence, send emails to jiashunmail@gmail.com or zke@fas.harvard.edu} $\quad$  Zheng Tracy Ke,  $\quad$ Bingcheng Sui}
\date{\today}

\begin{document}
\maketitle

\begin{abstract}
We present a new impossibility region for the $5\times5$ symmetric
nonnegative inverse eigenvalue problem. The region lies in the low-trace
regime and, to the best of our knowledge, has not been identified previously.
The proof uses a suitable diagonal shift to transform the problem to a
critical high-trace boundary and then reduces a complementary commuting
matrix to a weighted five-cycle. The characteristic polynomial and
spectral identities of this five-cycle provide the main tools for deriving
the resulting contradiction.
\end{abstract}

\section{Introduction}
The nonnegative inverse eigenvalue problem (NIEP) is one of the longstanding
and particularly challenging open problems in matrix theory. For example, the
survey of \citet{JohnsonMarijuanPaparellaPisonero2018} describes the NIEP as a
famously difficult problem in matrix analysis. The origins of the problem can
be traced back to Kolmogorov, who in 1937 asked which complex numbers can occur
as eigenvalues of nonnegative matrices. Suleimanova subsequently extended this
question in 1949 to prescribed lists of eigenvalues, leading to what is now
known as the NIEP \citep{Kolmogorov1937,Suleimanova1949}.
More precisely, let
\[
    \lambda=(\lambda_1,\lambda_2,\ldots,\lambda_n) 
\]
be a list of $n$ complex numbers, counted with multiplicity. The NIEP asks for
necessary and sufficient conditions on $\lambda$ under which there exists an
$n\times n$ entrywise nonnegative matrix $A$ whose spectrum is precisely
$\lambda$. When such a matrix exists, the list $\lambda$ is said to be
\emph{realizable}, and $A$ is called a \emph{realizing matrix}. Throughout
this paper, we refer to $n$ as the \emph{order} of the problem.

An important subproblem arises when all prescribed eigenvalues are real.
Restricting the NIEP to real lists gives the \emph{real nonnegative inverse
eigenvalue problem} (RNIEP). If, in addition, the realizing matrix is required
to be symmetric, one obtains the \emph{symmetric nonnegative inverse
eigenvalue problem} (SNIEP). Another closely related problem is the \emph{symmetric doubly stochastic
inverse eigenvalue problem} (SDIEP), which asks which real spectral lists
can be realized by symmetric doubly stochastic matrices. For $n\leq 4$, the RNIEP and SNIEP are
equivalent: every real spectrum that is realizable by a nonnegative matrix
also admits a symmetric nonnegative realization. This equivalence fails
beginning at order five; in particular, there exist real spectra of size
$n\geq 5$ that are realizable by nonnegative matrices but not by symmetric
nonnegative matrices
\citep{JohnsonLaffeyLoewy1996,EglestonLenkerNarayan2004}.

The NIEP, RNIEP, and SNIEP are all completely solved for orders $n\leq 4$.
Beginning at $n=5$, however, none of the three problems has been completely
solved in general, although many important special cases and spectral regions
have been characterized; see
\citep{EglestonLenkerNarayan2004,JohnsonMarijuanPaparellaPisonero2018}.
Thus, order five is the first genuinely open case for the SNIEP.

In this paper, we focus on the SNIEP of order five. The order-five problem has
received considerable attention over the past several decades. One important
constructive approach is due to Soules \citep{Soules1983}, who introduced a
class of orthogonal eigenvector matrices that can be used to construct
symmetric nonnegative matrices with prescribed spectra. McDonald and Neumann
\citep{McDonaldNeumann2000} studied Soules' approach in detail for matrices of
order at most five. Loewy and McDonald \citep{LoewyMcDonald2004} subsequently
developed a systematic analysis of the $5\times5$ SNIEP and derived several
important structural and spectral restrictions.

Further progress was obtained by Spector, who gave a complete characterization
of the trace-zero $5\times5$ SNIEP \citep{Spector2011}. Spector later
constructed symmetric nonnegative families realizing a previously unknown
part of the order-five region \citep{Spector2014}. Loewy and Spector then
established an important high-trace result \citep{LoewySpector2017}. More
precisely, let $\lambda_1\geq\lambda_2\geq\cdots\geq\lambda_5$ 
be the eigenvalues of a symmetric entrywise-nonnegative $5\times5$ matrix
$A$, and write $S=\sum_{i=1}^5\lambda_i=\tr(A)$. 
They proved, in particular, that if $\lambda$ is realizable, then 
\begin{equation} \label{eq:hightrace} 
S\geq \frac{\lambda_1}{2}
\qquad\Longrightarrow\qquad
\lambda_3\leq S.
\end{equation} 
Motivated by this threshold, we use the term \emph{high-trace regime} for
$S\geq\lambda_1/2$ and \emph{low-trace regime} for $S<\lambda_1/2$. These
terms are used here only as convenient terminology for the order-five problem.
The high-trace regime is substantially better understood, and  the
low-trace regime contains the more delicate cases and is less understood.  

In recent years, the low-trace region has been studied from several directions. On the
necessary-condition side,
\citet{JohnsonMarijuanPisonero2017,JohnsonMarijuanPisonero2021} developed
interlacing arguments that rule out many five-spectra, while \citet{Loewy2021}
derived a stronger moment-type necessary condition for order five. On the
sufficient-condition side, a broad collection of realizability criteria has
also been developed; see, for example,
\citep{SotoRojoMoroBorobia2007,EllardSmigoc2016,MarijuanPisoneroSoto2017}.
Despite this substantial progress, the order-five SNIEP is not yet completely
resolved.

Our contribution is on the necessary-condition side.  Consider the set of all possible $\lambda 
= (\lambda_1, \lambda_2, \ldots, \lambda_5)$.  After removing the cases settled by existing results, the remaining cases are contained in a region denoted by $\R$, defined precisely in \eqref{eq:R} below. 
In particular,
lists in $\R$ have three positive and two negative eigenvalues and satisfy
\[ 
0<S<\min\left\{\lambda_3,\frac{\lambda_1}{2}\right\}; 
\] 
see (\ref{eq:R}) for details. 
Our main result identifies a subregion of $\R$ defined by 
\[
\Wreg=\left\{\lambda\in\R:4\lambda_1\leq 9\lambda_3-S\right\},
\]
and we show that no list in $\Wreg$
admits a symmetric  nonnegative realization. To the best of our knowledge, this
explicit impossibility region has not previously been identified.

We briefly describe the main mechanism of the proof. Suppose, for
contradiction, that a list in $\Wreg$ has a symmetric nonnegative realization
$A$. Since $S<\lambda_3$, the diagonal shift
\[
c=\frac{\lambda_3-S}{4}>0,
\qquad
B=A+cI_5,
\]
places the shifted matrix at a critical trace level: the third eigenvalue of
$B$ is exactly $\tr(B)$, while the defining inequality of $\Wreg$ ensures
$\tr(B)\geq\lambda_1(B)/2$. This places $B$ in the high-trace regime (but with a shifted spectrum); see (\ref{eq:hightrace}).  At the same time,  we show that there exists  a $5 \times 5$ symmetric nonnegative matrix $X$ which commutes with $B$ ($B X = XB$) and satisfies some additional requirements,   and we show that the positive off-diagonal support graph of such an $X$ must be a five-cycle. 

Next, we divide the proof into three cases: (I)  repeated-positive, 
(II)  nonrepeated strict, and (III) nonrepeated equality boundary.  
In (I) and (III), the commutation relation further yields a representation
of the form
\begin{equation}\label{eq:B}
B=dI_5+tX^2,
\end{equation}
for suitable scalars $d$ and $t>0$.  The characteristic polynomial and spectral
identities of a positive weighted five-cycle then impose strong restrictions
on the eigenvalues of $X$, leading to a contradiction. In case (II),  we have a 
 positive margin in the high-trace inequality, so we can prove the claim 
 using a perturbation approach.  Therefore,  the three cases share the same shift-and-cycle
reduction,  though the final step is different. 

Our construction of $X$ is related to the complementary commuting matrix construction of Laffey \citep{Laffey1998}, while our perturbation and strictness
argument at the high-trace boundary is related to the boundary arguments
used in the order-five SNIEP by \citet{LoewySpector2017}.
The main technical distinction of our proof is the way these ideas are
combined with the specific diagonal shift above.  This shift moves the
original low-trace problem to the critical level
$\lambda_3(B)=\tr(B)$, 
while retaining the high-trace inequality
$\tr(B)\geq \frac{\lambda_1(B)}{2}$. 
We then show that the positive off-diagonal support of this complementary commuting matrix is a weighted
five-cycle.  This reduction makes the characteristic polynomial and
spectral identities of the five-cycle the main tools in the final argument.

So far, our discussion has been mainly theoretical. On the application side,
the NIEP and closely related nonnegative matrix factorization (NMF)  problems have
also received attention in network modeling. For example,
\citet{Jin2022NMF} developed sharp results for nonnegative matrix
factorization and used them to study when general low-rank network models
can be represented by interpretable mixed-membership models.
More recently, \citet{JinHuang2024} studied the NIEP and NMF jointly,
developing constructive results with further implications for social network
modeling. We discuss these connections in more detail in
Section~\ref{sec:network}.

We finish this section by fixing notation used throughout the paper. Unless
stated otherwise, the eigenvalues are ordered as
\[
\lambda_1\geq\lambda_2\geq\lambda_3\geq\lambda_4\geq\lambda_5,
\qquad
S=\sum_{i=1}^5\lambda_i.
\]
We write $I_5$ for the $5\times5$ identity matrix, $\tr(M)$ for the trace of a
matrix $M$, and $\Spec(M)$ for its spectrum, counted with multiplicity. For a
symmetric nonnegative matrix $M$, its \emph{positive off-diagonal support
graph} has vertex set $\{1,\ldots,5\}$ and an edge $\{i,j\}$ exactly when
$M_{ij}>0$ for $i\neq j$. We write $C_5$ for the cycle on five vertices and
interpret cycle indices modulo $5$ when no confusion can arise. The residual
region and the new impossibility region are denoted by $\R$ and $\Wreg$,
respectively, and are defined in Section~2. The letter $A$ will generally
denote a hypothetical realizing matrix, while $B$ denotes its diagonal shift
and $X$ the complementary commuting matrix arising in the proof.

The remainder of the paper is organized as follows. Section~2 states the main
result and explains why the analysis can be reduced to $\R$. Section~3
discusses the connection with network modeling. Section~\ref{sec:proof} gives
the proof of the main theorem, beginning with three structural lemmas and then
treating the three spectral cases separately.  The concluding part of the paper contains a short discussion, acknowledgements, and the proofs of the structural lemmas used in the main argument.

\section{Main result} \label{sec:main} 
Throughout this paper, let
\[
\lambda=(\lambda_1,\lambda_2,\ldots,\lambda_5),
\qquad
\lambda_1\geq\lambda_2\geq\lambda_3\geq\lambda_4\geq\lambda_5,
\qquad
S=\sum_{i=1}^5\lambda_i,
\]
where all $\lambda_i$ are real. We study when $\lambda$ is the spectrum of a
$5\times5$ symmetric entrywise-nonnegative matrix.
As summarized by \citet{MarijuanPisonero2018}, after the previously settled
order-five cases are removed, it is enough to consider spectra satisfying
\[
\lambda_1=\max_i|\lambda_i|,
\qquad
\lambda_1>\lambda_2\geq\lambda_3>0>\lambda_4\geq\lambda_5,
\qquad
0<S<\lambda_3.
\]
Moreover, \citet{LoewySpector2017} showed that if $\lambda$ is realizable and
$S\geq\lambda_1/2$, then necessarily $\lambda_3\leq S$. Combining this with
$S<\lambda_3$, we may further restrict attention to $S<\lambda_1/2$.
Consequently, the cases not covered by these existing results are contained
in
\begin{equation}\label{eq:R}
\R=
\left\{
\lambda:
\lambda_1=\max_i|\lambda_i|,
\ \lambda_1>\lambda_2\geq\lambda_3>0>\lambda_4\geq\lambda_5,
\ 0<S<\min\{\lambda_3,\lambda_1/2\}
\right\},
\end{equation}
which we call the \emph{residual region}. See
Remark~\ref{rem:whyR} for a more detailed explanation.

We now focus on $\R$ and introduce the subregion
\begin{equation}\label{eq:W}
\Wreg=
\left\{
\lambda\in\R:
4\lambda_1\leq 9\lambda_3-S
\right\}.
\end{equation}
The set is nonempty (see Remark 2.4).  Our main result is the following theorem.

\begin{theorem}[A new impossibility region]\label{thm:main}
No list in $\Wreg$ is the spectrum of a real symmetric
entrywise-nonnegative $5\times5$ matrix.
\end{theorem}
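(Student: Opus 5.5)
We argue by contradiction. Suppose $\lambda\in\Wreg$ is realized by a symmetric nonnegative $A$. Set $c=(\lambda_3-S)/4>0$ and $B=A+cI_5$. Then $B$ is symmetric, nonnegative, with eigenvalues $\mu_i=\lambda_i+c$ and trace $T=S+5c$. A direct check gives
\[
\mu_3=\lambda_3+c=S+5c=T,
\]
so $B$ sits exactly at the critical level $\mu_3=\tr(B)$. The inequality $\tr(B)\ge\mu_1/2$ is equivalent to $2S+10c\ge\lambda_1+c$, that is, $2S+\tfrac94(\lambda_3-S)\ge\lambda_1$. Multiplying by $4$ gives $4\lambda_1\le 9\lambda_3-S$, which is precisely the defining inequality of $\Wreg$. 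Hence $B$ lies on the boundary $\mu_3=\tr(B)$ of the Loewy--Spector high-trace inequality \eqref{eq:hightrace}. This is not yet a contradiction, since \eqref{eq:hightrace} only gives $\mu_3\le T$. The rest of the proof must show that equality cannot occur in our configuration, using that $\mu_4,\mu_5$ can be made small compared with $\mu_1$ while $\mu_1>\mu_2\ge\mu_3$.

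\textbf{Step 1 (structure at the boundary).} Following Laffey's complementary commuting matrix idea, we use $\mu_3=T$ to produce a nonzero symmetric nonnegative $X$ commuting with $B$. Concretely, write $B=\sum\mu_i u_iu_i\T$ and take $X$ of the form $\sum_i \phi(\mu_i)u_iu_i\T$, chosen as a combination of $I$, $B$ and $B^2$ (or a Lagrange-type projector combination), with $\tr(X)=0$-type normalizations so that its diagonal vanishes or is controlled. Nonnegativity of $X$ would come from the extremality: if $X$ had a negative entry, a perturbation $B+\eps X$ would keep nonnegativity and push $\mu_3$ strictly above the trace, contradicting \eqref{eq:hightrace}. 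Next we analyze the positive off-diagonal support graph of $X$. Using $BX=XB$, the entry constraints, and the fact that $X$ has a specific inertia inherited from the sign pattern $(\mu_1,\mu_2,\mu_3 \text{ large};\ \mu_4,\mu_5)$, we exclude stars, paths, triangles and vertices of degree at least $3$ by small case checks. This should leave a connected $2$-regular graph on five vertices, namely $C_5$.

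\textbf{Step 2 (the three cases).} Split according to the repetition pattern of the spectrum of $B$: (I) a repeated positive eigenvalue $\mu_2=\mu_3$; (II) distinct eigenvalues with $\tr(B)>\mu_1/2$ strictly; (III) distinct eigenvalues with $\tr(B)=\mu_1/2$.

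In cases (I) and (III), the commutation with a weighted $C_5$ adjacency matrix and the multiplicity/eigenspace matching force $B$ to be a polynomial in $X$. Comparing the supports of $B$ and $X^2$ should then give $B=dI_5+tX^2$ with $t>0$. The weighted five-cycle has characteristic polynomial
\[
x^5-e_2x^3-2\pi_5+e_2'x=0\pi,
\]
where $e_2=\sum w_i^2$, $\pi_5=\prod w_i$, and $e_2'$ is a quartic in the weights. Consequently the spectrum of $X$ has trace $0$, $\tr X^2=2\sum w_i^2$, $\tr X^3=0$ (no triangles), and $\tr X^5=10\pi_5>0$. Translating these into constraints on $\mu_i=d+t\nu_i^2$, we aim to show the eigenvalues of $X$ cannot produce $\mu_1>\mu_2\ge\mu_3=\tr B$ together with the required boundary relation. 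Parity/symmetry of $\nu\mapsto\nu^2$ forces eigenvalue pairings that contradict the ordering or $\mu_4,\mu_5<\mu_3$.

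In case (II), the high-trace inequality has positive slack, so a small perturbation of $B$ within symmetric nonnegative matrices, along the direction of $X$ or along the $u_3u_3\T$ direction, raises $\mu_3$ above the trace while keeping $\tr\ge\mu_1/2$. This contradicts \eqref{eq:hightrace} directly, provided the perturbation keeps $\mu_3$ simple.

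\textbf{Main obstacle.} I expect the hardest step to be Step 1: constructing $X$ with guaranteed nonnegativity and then proving that its support is exactly $C_5$. This requires a careful Lagrange-multiplier/KKT argument at the Loewy--Spector boundary and a case-by-case graph elimination. I would first attempt it by taking $X$ as the dual certificate of the optimization problem $\max\ \mu_3(M)-\tr M$ over nonnegative symmetric $M$ with fixed remaining spectral data.
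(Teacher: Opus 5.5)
Your outline follows the paper's architecture closely: the shift to $\theta_3=\tr(B)$, a Laffey-type complementary commuting $X$, $C_5$ support, $B=aI_5+bX^2$, the same three-case split, and the five-cycle identities. However, each load-bearing step is either missing or stated in a form that fails.

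\textbf{Construction of $X$.} Taking $X$ as a combination of $I,B,B^2$ gives no control over its sign, nor over the condition $X_{ij}B_{ij}=0$ on which everything later runs. Your nonnegativity heuristic is also not valid: a negative entry of $X$ at a zero of $B$ destroys nonnegativity of $B+\eps X$, and nothing makes $\mu_3$ outrun the trace. The missing idea is a \emph{strict} form of Loewy--Spector: a strictly positive matrix with $\tr\ge\mu_1/2$ has $\mu_3<\tr$ (Lemma~\ref{lem:positive-strict}, proved by moving $\mu_2,\mu_3$ up and $\mu_4,\mu_5$ down by $\eps$). Since $\theta_3-\tr$ is invariant under isospectral moves, the non-strict inequality alone can never be contradicted that way. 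With strictness, the map $K\mapsto BK-KB$ over skew-symmetric $K$, restricted to the zero positions of $B$, cannot meet the open positive orthant; otherwise $e^{-tK}Be^{tK}$ would be a strictly positive isospectral copy of $B$. Separation then gives $y\ge0$, $y\ne0$, orthogonal to that image. Placing $y$ on the zero positions yields $X\ge0$ with $\tr(K(XB-BX))=0$ for all skew $K$, i.e.\ $BX=XB$.

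\textbf{Support reduction.} Your graph case checks miss the real case. Because the Perron vector of $B$ is an eigenvector of $X$, all components of the support graph share the spectral radius of $X$. Together with co-connectivity this leaves $C_5$ \emph{and} the disconnected $K_2\cup P_3$. The latter is excluded only by an eigenspace computation that uses $\theta_4<0$, $\theta_1\le2\theta_3$ and $\theta_3=\tr(B)$. You also need irreducibility of $B$ (from $S<\lambda_3$) and $\theta_4<0$ (from $\lambda_1+\lambda_5\ge0$), which you never establish.

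\textbf{Case (II).} Perturbing along $u_3u_3\T$ raises $\mu_3$ and the trace equally. $B+\eps X$ lifts $\mu_3$ above the trace only if $x_3>0$, a sign you cannot control, and $B-\eps X$ is not nonnegative. The paper uses $P_\eps=B+\eps X+\frac{\eps x_3}{4}I_5$ instead. It is strictly positive because $B$ and $X$ have complementary $C_5$ supports, it stays on $\lambda_3=\tr$, and it keeps strict high-trace slack, contradicting Lemma~\ref{lem:positive-strict}.

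\textbf{Cases (I) and (III).} ``Parity of $\nu\mapsto\nu^2$ forces pairings'' is not a contradiction.
In (I) it only shows that the two $X$-eigenvalues paired with $\theta_2=\theta_3$ coincide. One must then show they are negative and parametrize $\Spec(X)=|s_0|(x,y,z,-1,-1)$ with $y+z=2-x$ and $yz=2(x-1)^2/(2-x)$, from $\tr X=0$ and the vanishing third elementary symmetric function. Next, $\theta_1\le2\theta_3$ forces $1<x<6/5$, and one concludes $\theta_5<-2\theta_3\le-\theta_1$, violating Perron dominance.
In (III) no pairing occurs at all. The paper determines the signs of $x_2,x_3$ (the latter again via $P_\eps$) and derives the balance identity $8v^2=5r^2+u^2+p^2+q_0^2$. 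It then plays the \emph{entrywise} constraint $B_{ii}=a+b(y_{i-1}^2+y_i^2)>0$, which bounds every $y_i^2$ from below, against the upper bound on $\prod y_i$ coming from $\det X=2\prod y_i$. Purely spectral pairing arguments cannot see this.

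\textbf{Deriving $B=aI_5+bX^2$.} Obtaining it by ``multiplicity matching'' fails exactly in Case (I), where $X$ has a repeated eigenvalue. It should instead be read off directly from the $(i,i+1)$ and $(i,i+2)$ entries of $BX=XB$.
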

Theorem~\ref{thm:main} removes the entire closed side
$4\lambda_1\leq 9\lambda_3-S$ 
from the residual region. Therefore, any list in $\R$ that remains
potentially realizable must lie in
\[
\R\setminus\Wreg
=
\left\{
\lambda\in\R:
4\lambda_1>9\lambda_3-S
\right\}.
\]
To the best of our knowledge, the impossibility region identified in
Theorem~\ref{thm:main} has not been identified previously.

\begin{remark}[Why it is enough to focus on $\R$]\label{rem:whyR}
We explain why the cases not covered by earlier results are contained
in $\R$. The standard necessary conditions for a nonnegative realization give
\[
\lambda_1=\max_i|\lambda_i|,
\qquad
S=\tr(A)\geq0.
\]
The order-five trace-zero case is completely characterized by
\citet{Spector2011}, and the remaining lower-order and previously classified
sign patterns are covered by the order-four theory and the classical
order-five results; see, for example,
\citep{LoewyLondon1978,Fiedler1974,LoewyMcDonald2004,
Spector2014,MarijuanPisonero2018}.
After these cases are removed, the remaining positive-trace configuration
can be restricted to
\begin{equation}\label{eq:remaining-sign}
\lambda_1=\max_i|\lambda_i|,
\qquad
\lambda_1>\lambda_2\geq\lambda_3>0>\lambda_4\geq\lambda_5,
\qquad
S>0.
\end{equation}
There are two further reductions. First, suppose that $S\geq\lambda_3$.
Then
\[
\lambda_1+\lambda_2+\lambda_4+\lambda_5
=
S-\lambda_3
\geq0.
\]
The four-list $(\lambda_1,\lambda_2,\lambda_4,\lambda_5)$ also satisfies the
Perron condition $\lambda_1=\max_i|\lambda_i|$. By the order-four theory
\citep{LoewyLondon1978,Fiedler1974}, there exists a symmetric nonnegative
$4\times4$ matrix $B$ with spectrum
$(\lambda_1,\lambda_2,\lambda_4,\lambda_5)$. Since $\lambda_3>0$, the block
matrix
\[
A=
\begin{pmatrix}
B&0\\
0&\lambda_3
\end{pmatrix}
\]
is symmetric and nonnegative and has spectrum $\lambda$. Hence the range
$S\geq\lambda_3$ is already realizable. Therefore, any case not covered by
the preceding results must satisfy
\[
0<S<\lambda_3.
\]
Second, suppose that $S\geq\lambda_1/2$. The high-trace theorem of
\citet{LoewySpector2017} implies that realizability would require
\[
\lambda_3\leq S.
\]
This contradicts $S<\lambda_3$. Thus the portion
\[
0<S<\lambda_3,
\qquad
S\geq\frac{\lambda_1}{2},
\]
is already known to be nonrealizable. We may therefore further restrict to
\[
S<\frac{\lambda_1}{2}.
\]
Combining these reductions with \eqref{eq:remaining-sign} gives exactly the
conditions defining $\R$ in \eqref{eq:R}. Thus the cases not covered by the
existing results described above are contained in the residual region $\R$.
\end{remark}

\begin{remark}[Why the boundary has this form]\label{rem:whyboundary} 
We explain why the boundary that  defines the new impossibility region has the form of 
$4 \lambda_1 \leq 9 \lambda_3 - S$.  
Suppose $\lambda\in\Wreg$ is realizable with a realizing matrix $A$.
Previous work suggests that the high-trace regime
\[
S\geq \frac{\lambda_1}{2}
\]
may be more tractable than the low-trace regime (e.g., 
\cite{LoewySpector2017}).  However, since $\Wreg\subset\R$, we have
$S<\lambda_1/2$, so $A$ lies in the low-trace regime.  This motivates us
to construct a new symmetric nonnegative matrix $B$ that lies in the
high-trace regime. For $c>0$, consider the diagonal shift
\[
B=A+cI_5.
\]
In order for $B$ to be in the high-trace regime, we must have $\tr(B) \geq \lambda_1(B) / 2$, or equivalently 
\begin{equation}\label{eq:c-high}
c\geq \frac{\lambda_1-2S}{9}.
\end{equation}
At the same time,  the high-trace theorem \citep{LoewySpector2017} states that if a symmetric
nonnegative matrix $M$ realizes an ordered spectrum and
$\tr(M)\geq \frac{\lambda_1(M)}{2}$, then we must have 
$\lambda_3(M)\leq \tr(M)$. 
Therefore, if $B$ lies in the high-trace regime, then we must have 
$\lambda_3(B)\leq\tr(B)$, or  equivalently,  
\[
c\geq\frac{\lambda_3-S}{4}.
\]  
Here, for our proof below (e.g., Lemmas \ref{lem:positive-strict}-\ref{lem:certificate}), it is especially useful to consider the equality case where we choose $c$ such that 
$\lambda_3(B)=\tr(B)$.   Merely placing $B$ in the high-trace regime does
not by itself provide enough structure for our argument.  At equality,
however, $B$ lies on the boundary of the high-trace necessary condition.
As shown in Lemma~\ref{lem:positive-strict}, a strictly positive matrix in
the high-trace regime must satisfy the strict inequality
\[
\lambda_3(B)<\tr(B).
\]
Moreover, the boundary condition is precisely what is used in
Lemma~\ref{lem:certificate} to obtain a complementary commuting
matrix, whose support is eventually reduced to a five-cycle.
This motivates the critical choice where we choose $c$ in $B$ by 
\[
c = c_*=\frac{\lambda_3-S}{4}. 
\]
Now,  by (\ref{eq:c-high}),  in order for $B$ in the high-trace regime,   we must have 
\[
c_* \geq  
\frac{\lambda_1-2S}{9}, 
\qquad \mbox{or equivalently}, \qquad 
4\lambda_1\leq 9\lambda_3-S.
\]
This is precisely the inequality defining $\Wreg$. 
\end{remark}

\begin{remark}[The region $\Wreg$ is nonempty]\label{rem:Wnonempty}
The region in Theorem~\ref{thm:main} is genuinely nonempty. For example,
consider
\[
\lambda=
\left(
1,\frac{1}{2},\frac{1}{2},-\frac{19}{24},-\frac{19}{24}
\right).
\]
It is seen that $\lambda_1=\max_i|\lambda_i|=1$,  $S = 5 / 12$, $\min\{\lambda_1/2, \lambda_3\} = 1/2$, 
and so $0 < S < \min \{\lambda_3,\frac{\lambda_1}{2}\}$.  Therefore,  $\lambda\in\R$. Moreover,
$4\lambda_1 - (9 \lambda_3 - S)  = - 1/12 < 0$ so $4 \lambda_1 < 9  \lambda_3 - S$.  
Therefore,   $\lambda\in\Wreg$, and Theorem~\ref{thm:main} implies that this
list does not admit a symmetric nonnegative realization.
\end{remark}

\section{Connections to network modeling} \label{sec:network}   
So far, we have been focused on the theoretical side. On the application side, SNIEP 
can be useful in problems such as network modeling.  In this section, we  discuss how SNIEP and network modeling are connected. 
 Consider an undirected network with $n$ nodes and $K$ communities (communities are tightly woven nodes that have more edges within than between \citep{JinKeTangWang2025}). Let $A$ be the adjacency matrix of the network and let $\Omega$ be the corresponding Bernoulli probability matrix, where  
\[
\Omega_{ij}=\mathbb{P}(A_{ij}=1),   \mbox{for all $i \neq j$},  \mbox{and so $\mathbb{E}[A] = \Omega - \mathrm{diag}(\Omega)$}. 
\]
We call this a rank-$K$ network model when $\operatorname{rank}(\Omega)=K$.

Among all rank-$K$ network models, the {\it degree-corrected mixed-membership (DCMM)} model 
fits well with real networks and is especially useful 
 \citep{JinKeTangWang2025}, which writes 
\[
\Omega=\Theta\Pi P\Pi'\Theta.
\]
Here $\Theta=\operatorname{diag}(\theta_1,\ldots,\theta_n)$ models degree heterogeneity, each row of $\Pi$ is a probability vector describing the mixed membership of a node, and the symmetric entrywise-nonnegative matrix $P\in\mathbb{R}^{K\times K}$ describes baseline connectivity between communities. The DCMM includes  the popular  stochastic block model (SBM) \citep{HollandLaskeyLeinhardt1983} and the degree-corrected block model (DCBM) \citep{KarrerNewman2011} 
as special cases.  

We now compare the DCMM model and the rank-$K$ model.  The DCMM is more useful: all parameter matrices $(\Theta, \Pi, P)$ have practical meanings, which makes
the model more interpretable and more useful for community detection, mixed-membership estimation, and related network-analysis tasks \citep{KeJin2023,JinKeLuo2024}. The rank-$K$ formulation is broader but carries less explicit community structure.   
Naturally, we wish to study how broad the DCMM model is. This motivates the following problem in network modeling: 
\begin{equation} \label{networkmodeling} 
\mbox{When can we represent a  general rank-$K$ network model in the DCMM form?}. 
\end{equation} 
Seemingly, the problem is essentially a non-negative matrix factorization (NMF)  problem. But what is 
interesting is that, the problem is also related to SNIEP, or the {\it symmetric doubly stochastic inverse eigenvalue problem (SDIEP)} 
to be precise.  
In detail, consider a rank-$K$ network model and suppose that $\Omega$ has exactly $m$ negative nonzero eigenvalues, where we note $0\le m\le K-1$. Let 
$a_{K,m}=\max\{0,2m-K\}$ and $\rho_{K,m}=1+a_{K,m}$. 
Define
\[
\sigma_{K,m}
=
\left(
\rho_{K,m},
\underbrace{1,\ldots,1}_{K-m-1},
\underbrace{-1,\ldots,-1}_{m}
\right).
\]
In \citep{Jin2022NMF}, the author pointed out in order to solve the network modeling problem in (\ref{networkmodeling}), 
a key component is to solve the SDIEP problem: 
\[
\mbox{for which $(K, m)$, $\sigma_{K,m}$, up to a scaling factor, admits a symmetric doubly stochastic realization}.  
\] 
The underlying argument is elementary but a little bit long, so we skip the details here (see \citep{Jin2022NMF}, where we have  the details). 
In the case of $m \leq K/2$, $\sigma_{K, m}$ is always realizable and 
 \citep{Jin2022NMF} gives the required realization.  The more difficult case of  $m>K/2$ is studied in \citet{JinHuang2024}, where additional constructions and obstructions are developed.

Such a connection demonstrates that SNIEP can be useful in applications such as network modeling, and motivates 
a new research direction for SDIEP.

\section{Proof of Theorem~\ref{thm:main}}\label{sec:proof} 
We prove by contradiction. Fix a $\lambda = (\lambda_1, \lambda_2, \ldots, \lambda_5) \in \Wreg$ and let $S=\sum_{k=1}^5\lambda_k$. By definition, 
\begin{equation} \label{W-cond}
\lambda_1>\lambda_2\ge \lambda_3>0>\lambda_4\ge\lambda_5, \qquad
0<S<\min\Bigl\{\lambda_3, \frac{\lambda_1}{2}\Bigr\}, 
\qquad
4\lambda_1\le 9\lambda_3-S,
\end{equation}
Suppose $\lambda$ is realizable by 
a $5 \times 5$ nonnegative symmetric matrix $A$. We first construct a shifted matrix $B$ and derive some useful properties of $B$.

\begin{lemma}[The shifted matrix]\label{lem:shiftedB}
Suppose a real symmetric entrywise-nonnegative matrix $A$ has spectrum
$(\lambda_1,\ldots,\lambda_5)$, where  $\lambda$ satisfies  \eqref{W-cond}. Let 
\begin{equation} \label{DefineB}
B=A+cI_5, \qquad \mbox{with}\quad c=\frac{\lambda_3-S}{4}.
\end{equation}
Let $(\theta_1,\ldots,\theta_5)$ denote the spectrum of $B$, and $\tau=\tr(B)$. Then, $B$ is irreducible and has strictly positive diagonal entries. Its spectrum satisfies that 
\begin{equation} \label{B-spectrum}
\theta_1>\theta_2\geq \theta_3=\tau>0>\theta_4\geq \theta_5, \qquad  \tau\geq \frac{\theta_1}{2}. 
\end{equation}
Furthermore, $\tau=\theta_1/2$ if and only if $4\lambda_1=9\lambda_3-S$. 
\end{lemma}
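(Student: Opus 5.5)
Most of the lemma follows from direct computation with the shift $B=A+cI_5$, $c=(\lambda_3-S)/4$. Since $S<\lambda_3$ by \eqref{W-cond}, we have $c>0$. The eigenvalues of $B$ are $\theta_i=\lambda_i+c$, in the same order. The trace is
\[
\tau=S+5c=\frac{5\lambda_3-S}{4},\qquad \theta_3=\lambda_3+c=\frac{5\lambda_3-S}{4},
\]
so $\theta_3=\tau$. This is positive because $S<\lambda_3$ and $\lambda_3>0$. The ordering $\theta_1>\theta_2\ge\theta_3$ and $\theta_4\ge\theta_5$ is inherited from $\lambda$. For the high-trace inequality, $2\tau\ge\theta_1$ reads
\[
\frac{5\lambda_3-S}{2}\ \ge\ \lambda_1+\frac{\lambda_3-S}{4},
\]
which simplifies to $4\lambda_1\le 9\lambda_3-S$. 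This is exactly the defining inequality of $\Wreg$, and equality holds in one exactly when it holds in the other. That gives the final "iff" claim.

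The one nontrivial spectral point is $\theta_4<0$, that is, $\lambda_4+c<0$, or $4\lambda_4+\lambda_3-S<0$. Writing $S=\lambda_1+\lambda_2+\lambda_3+\lambda_4+\lambda_5$, this becomes
\[
3\lambda_4<\lambda_1+\lambda_2+\lambda_5.
\]
I would prove it from the Perron condition $\lambda_1\ge|\lambda_5|$, which gives $\lambda_1+\lambda_5\ge0$. Then the right-hand side is at least $\lambda_2>0>3\lambda_4$. Consequently $\theta_5\le\theta_4<0$.

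For the structural claims, the diagonal entries of $B$ are $A_{ii}+c\ge c>0$. Irreducibility is inherited from $A$, since the shift does not change off-diagonal entries, so it suffices to show $A$ is irreducible. Suppose instead that $A$ is permutation-similar to a direct sum $A_1\oplus A_2$ of nonnegative symmetric blocks of sizes $k$ and $5-k$. Each block has a Perron root, which is a nonnegative eigenvalue, and each block's trace is nonnegative. The spectrum has only three positive eigenvalues and no zero eigenvalue, so I would split by block sizes.

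A $1\times1$ block contributes a nonnegative eigenvalue, so it takes some $\lambda_j$ with $j\le3$. The remaining four eigenvalues then have nonnegative sum, forcing $S\ge\lambda_j\ge\lambda_3$, which contradicts $S<\lambda_3$. For a $2+3$ split, each block must contain a positive eigenvalue. The $2\times2$ block has nonnegative trace, so if it holds one negative eigenvalue $\lambda_n$ together with some $\lambda_p$, then $\lambda_p\ge-\lambda_n$. I would then check that the trace of the $3\times3$ block, together with the Perron condition, forces $S\ge\lambda_3$. The most direct route is that the two blocks' traces sum to $S$, and the block containing $\lambda_3$, or the block lacking $\lambda_1$, already has trace at least $\lambda_3$ in each sign configuration.

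This case analysis of reducible $A$ is the step I expect to need the most care, particularly the $2+3$ splits. My first attempt would be to observe that in any split, some block omits $\lambda_1$. Its Perron root $\lambda_p$ is then at least the absolute value of its negative eigenvalues, so its trace is at least the trace of its remaining positive eigenvalues. Combining this with the nonnegative trace of the other block should give $S\ge\lambda_3$, contradicting $S<\lambda_3$.
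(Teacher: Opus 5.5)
\textbf{Verdict: the irreducibility step has a small but genuine gap; everything else is correct.} Outside irreducibility of $A$, your argument is the paper's. That covers $\tau=\theta_3=(5\lambda_3-S)/4>0$, the equivalence of $2\tau\ge\theta_1$ with $4\lambda_1\le 9\lambda_3-S$ (including the equality case), and $\theta_4<0$ from $\lambda_1+\lambda_5\ge0$ and $\lambda_2>0$. It also covers the positive diagonal and irreducibility of $B$ inherited from $A$. Your $1+4$ case is fine, and so is your sketch for a $2\times2$ block holding one negative eigenvalue.

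\textbf{What is missing.} You never treat a $2\times2$ block with two positive eigenvalues. Moreover, both uniform shortcuts you propose for the $2+3$ split fail.

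\emph{First counterexample configuration.} Let the $2\times2$ block have eigenvalues $\lambda_3,\lambda_4$ and the $3\times3$ block have $\lambda_1,\lambda_2,\lambda_5$. Then the block containing $\lambda_3$ and the block lacking $\lambda_1$ are both the $2\times2$ block, whose trace is $\lambda_3+\lambda_4<\lambda_3$. Your ``first attempt'' on that block (Perron root $\lambda_3$ absorbs $\lambda_4$, no positive eigenvalue remains) yields only that its trace is $\ge0$. Combined with mere nonnegativity of the other trace, this gives $S\ge0$, not $S\ge\lambda_3$.

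\emph{Second counterexample configuration.} The split with blocks $\{\lambda_1,\lambda_2\}$ and $\{\lambda_3,\lambda_4,\lambda_5\}$ defeats both shortcuts in the same way.

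In both configurations, the bound $\ge\lambda_3$ must come from the block that \emph{contains} $\lambda_1$. There is also a misuse of Perron dominance. In a block containing both $\lambda_4$ and $\lambda_5$, the Perron root dominates $|\lambda_4|$ and $|\lambda_5|$ separately, not their sum. So ``its trace is at least the trace of its remaining positive eigenvalues'' does not follow from the Perron root there. Only nonnegativity of the diagonal gives such a block trace $\ge0$.

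\textbf{The fix.} Case on where $\lambda_4,\lambda_5$ sit instead of on block sizes or on $\lambda_1$, as the paper does.
\begin{itemize}
\item If $\lambda_4,\lambda_5$ lie in the same block, that block has trace $\ge0$. The other block has only positive eigenvalues, so its trace is at least $\lambda_3$.
\item If they lie in different blocks, each block's Perron root is a positive eigenvalue dominating that block's single negative eigenvalue. So the two (Perron root, negative eigenvalue) pairs each contribute $\ge0$, and the third positive eigenvalue contributes at least $\lambda_3$.
\end{itemize}
Either way $S\ge\lambda_3$, contradicting $S<\lambda_3$. This case split also absorbs your $1+4$ case.
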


Hence, the shifted matrix $B$ is irreducible, has strictly positive diagonal, lies in the high-trace regime ($\tr(B)\geq \theta_1/2$), and satisfies the boundary identity $\tr(B)=\theta_3$.

The following lemma is proved in the appendix.  It focuses on entrywise strictly positive matrices and refines the Loewy–Spector high-trace condition for general entrywise nonnegative matrices: under strict positivity, the non-strict inequality in the Loewy–Spector condition becomes strict.
%%%%%%%%%%%%%
%%%%%%%%%%%%%
\begin{lemma}[Strictness for a positive matrix]\label{lem:positive-strict}
Let $P$ be a real symmetric entrywise \emph{strictly positive} $5\times5$ matrix with ordered eigenvalues
$\mu_1\ge\mu_2\ge\mu_3\ge\mu_4\ge\mu_5$. If  $\tr(P) \geq  \mu_1/2$, then
$\mu_3 < \tr(P)$. 
\end{lemma}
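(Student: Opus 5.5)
The plan is to argue by contradiction and perturb, using only the non-strict Loewy--Spector condition \eqref{eq:hightrace}. Suppose $P>0$ entrywise, $\tr(P)\ge\mu_1/2$, and $\mu_3\ge\tr(P)$. Since $\mu_3\le\tr(P)$ holds by \eqref{eq:hightrace}, we are then at equality, $\mu_3=\tr(P)$. The idea is to exploit the strict positivity of all entries: $P$ lies in the interior of the cone of nonnegative symmetric matrices. So we may move $P$ in any symmetric direction $E$, and $P+\eps E$ stays nonnegative for small $|\eps|$. We will pick $E$ so that, to first order, $\tr$ decreases relative to $\mu_3$ while the high-trace inequality $\tr\ge\mu_1/2$ is preserved. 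That produces a nonnegative matrix violating \eqref{eq:hightrace}.

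\textbf{Step 1: high-trace margin.} First handle the condition $\tr(P)\ge\mu_1/2$. If $\tr(P)>\mu_1/2$, there is slack, so small perturbations keep $\tr\ge\mu_1/2$. If $\tr(P)=\mu_1/2$, we will choose $E$ so that both the trace and the Perron eigenvalue are controlled. A convenient choice is to keep the direction orthogonal to the Perron vector $u$, i.e.\ $u\T E u=0$, so $\mu_1$ does not move to first order. Since $\mu_1$ is simple by Perron--Frobenius ($P$ is positive), first-order perturbation applies to it.

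\textbf{Step 2: choose the direction.} Let $v$ be a unit eigenvector for $\mu_3$. Assume first that $\mu_3$ is simple, so that $\mu_3(P+\eps E)=\mu_3+\eps\, v\T E v+O(\eps^2)$. We want
\[
v\T E v-\tr(E)>0 \qquad\text{and}\qquad u\T E u\le 2\tr(E)
\]
(the latter holds automatically in the slack case). Try $E=vv\T-\alpha I$. Then $v\T E v-\tr(E)=1-\alpha-(1-5\alpha)=4\alpha$, which is positive for $\alpha>0$. The second condition reads $(u\T v)^2-\alpha\le 2(1-5\alpha)$. Because $u\perp v$, we have $u\T v=0$, so the condition becomes $-\alpha\le 2-10\alpha$, i.e.\ $\alpha\le 2/9$. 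Taking $\alpha=1/9$, we get that $\mu_3-\tr$ increases at rate $4/9$ while $\tr-\mu_1/2$ changes at rate $(1-5/9)+1/18>0$.

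The remaining concern is the ordering of eigenvalues. $P+\eps E$ has eigenvalue $\mu_3+\eps(1-\alpha)$ along $v$, and the other eigenvalues shift by $-\eps\alpha$ plus the effect of $vv\T$ restricted to the complement, which is zero. Indeed $E$ commutes with $P$: $vv\T$ and $I$ both commute with $P$. So the perturbed spectrum is exactly $\mu_1-\eps\alpha,\ \mu_2-\eps\alpha,\ \mu_3+\eps(1-\alpha),\ \mu_4-\eps\alpha,\ \mu_5-\eps\alpha$. Its third largest eigenvalue is at least $\mu_3-\eps\alpha$, and the trace is $\tr(P)+\eps(1-5\alpha)$. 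This exact computation makes the argument clean even when $\mu_3$ is repeated. In that case, the third largest eigenvalue of the perturbed matrix is $\mu_3-\eps\alpha$ (if $\mu_2=\mu_3$, the raised eigenvalue becomes second or first). So the repeated case needs care.

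\textbf{Step 3: the main obstacle, ordering.} I would instead take $\eps<0$ with the same $E$, i.e.\ lower the $v$-eigenvalue. Then the spectrum is $\mu_i+|\eps|\alpha$ for $i\ne3$ and $\mu_3-|\eps|(1-\alpha)$, with trace $\tr(P)-|\eps|(1-5\alpha)$. The third largest eigenvalue is now at least $\min(\mu_4+|\eps|\alpha,\dots)$, which is not helpful. So the real choice is between the two signs, depending on whether $\mu_2>\mu_3$ (use $\eps>0$) or $\mu_2=\mu_3$. In the latter case, use $E=VV\T-\alpha I$, where $V$ spans the $\mu_3$-eigenspace intersected with $u^\perp$. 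Then all copies of $\mu_3$ rise together, and we need $\tr$ to rise less: with $k$ copies, the rates are $1-\alpha$ versus $k-5\alpha$. We need $1-\alpha>k-5\alpha$, i.e.\ $\alpha>(k-1)/4$, together with the margin condition $\alpha\le 2k/9$ from $\mu_1$. Since $\mu_1>\mu_2$ and $\mu_3>0>\dots$ is not assumed, $k\le 4$. For $k=2$ this requires $1/4<\alpha\le 4/9$, which is feasible. For larger $k$, I would need a more refined choice, perhaps raising only the block containing the second and third positions. This ordering and multiplicity bookkeeping is the step I expect to be the main obstacle.

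In every case, $P+\eps E$ is nonnegative for small $\eps$ by strict positivity, satisfies $\tr\ge\mu_1/2$, and has $\mu_3>\tr$. This contradicts \eqref{eq:hightrace}.
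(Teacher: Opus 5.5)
Your argument is essentially the paper's: reduce to the boundary case $\mu_3=\tr(P)$ via Loewy--Spector, then perturb $P$ along a direction that commutes with it and stays inside the open positive cone, so that the third eigenvalue rises above the trace while $\tr\ge\lambda_1/2$ is preserved. The only difference is the direction. The paper uses $P_\eps=Q\diag(\mu_1,\mu_2+\eps,\mu_3+\eps,\mu_4-\eps,\mu_5-\eps)Q\T$, which fixes both the trace and the Perron root and keeps the sorted order once $\eps<\mu_1-\mu_2$, so it needs no case split. Your $vv\T-\alpha I$ moves both quantities, which is why you have to track rates and separate $\mu_2>\mu_3$ from $\mu_2=\mu_3$.

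The case you flag as the main obstacle is not one, for two independent reasons. First, your own constraints $(k-1)/4<\alpha\le 2k/9$ are compatible for every $k<9$ (for example $\alpha\in(1/2,2/3]$ when $k=3$), so the same construction already works. Second, $k\ge3$ never occurs. The equality $\mu_3=\tr(P)$ gives $\mu_1+\mu_2+\mu_4+\mu_5=0$. Since $\mu_1+\mu_5\ge0$ by Perron dominance and $\mu_2\ge\mu_3=\tr(P)\ge\mu_1/2>0$, you get $\mu_4\le-\mu_2<0<\mu_3$. Hence the $\mu_3$-eigenspace has dimension $1$ or $2$. Both are covered by your choices: $\alpha=1/9$ when $\mu_2>\mu_3$, and, say, $\alpha=1/3$ applied to the two-dimensional eigenspace when $\mu_2=\mu_3$. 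With this remark your proof is complete.
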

Since the shifted matrix $B$ satisfies the requirement of Lemma \ref{lem:positive-strict}, it follows that some off-diagonal entries of $B$ must be $0$.  This observation, together with the irreducibility of $B$, ensures that there exists a nonzero symmetric $5 \times 5$ nonnegative matrix $X$ such that 
\[
\mbox{$X_{ij} B_{ij} = 0$ for all $i, j$ and $B X = X B$}  \qquad (\mbox{note: since $B_{ii} > 0$, we must have $X_{ii} = 0$}).   
\]
Since $X_{ij}B_{ij}=0$ implies that the support of $X$ (locations where $X$ is nonzero) is contained in the complement of the support of $B$, we call $X$ a \emph{complementary commuting matrix} for $B$. Here, the strictness in Lemma \ref{lem:positive-strict} is essential. Without the strictness, $B$ may not have any zero entry; consequently, $X$ will become a zero matrix, which is undesirable for our proofs.

We now establish the existence of the complementary commuting matrix. 
For any symmetric nonnegative matrix $X$ that has zero diagonal, its \emph{positive off-diagonal support graph} has vertices $1,\ldots,5$, and vertices $i$ and $j$ are joined by an edge exactly when $X_{ij}>0$.  We write $C_5$ for the cycle on five vertices, $K_2$ for a single edge, and $P_3$ for a path on three vertices.  Thus $K_2\cup P_3$ denotes two disconnected components, one a single edge and the other a three-vertex path. The following lemma is proved in the appendix. 
%%%%%%%%%%
%%%%%%%%%%
\begin{lemma}[Complementary commuting matrix and support reduction]\label{lem:certificate}
Let $B$ be an irreducible real symmetric entrywise-nonnegative $5\times5$ matrix with strictly positive diagonal.  Denote the  ordered eigenvalues of $B$ by  $\theta_1\ge\theta_2\ge\theta_3\ge\theta_4\ge\theta_5$, 
and assume $
\tr(B)= \theta_3  > 0$ and $\tr(B) \geq \theta_1/2$. 
There exists a nonzero real symmetric entrywise-nonnegative matrix $X$ such that
\begin{equation}\label{eq:certificate}
\mbox{$X_{ij} B_{ij} = 0$ for all $1\leq i, j\leq 5$} \qquad  \mbox{and}  \qquad
BX=XB.
\end{equation}
Also, for every such $X$, the positive off-diagonal support graph of $X$ is isomorphic to either
$C_5$ or $K_2\cup P_3$. In particular, the support graph of $X$ is isomorphic to $C_5$ if 
\begin{equation} \label{eq:sign-B}
\theta_1>\theta_2\geq \theta_3 >0>\theta_4\ge\theta_5
\end{equation}
\end{lemma}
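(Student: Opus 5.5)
The plan is to obtain $X$ from a first-order optimality condition, in the spirit of Laffey's complementary commuting matrix, and then to identify its support graph by a finite combinatorial analysis driven by the commutation relation.

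\textbf{Step 1: existence of $X$.} The key observation is that $B$ is an extremal point for the Loewy--Spector condition. On the set $\mathcal K$ of symmetric nonnegative $5\times5$ matrices $M$ with $\tr(M)\ge \theta_1(M)/2$, the functional $\Phi(M)=\theta_3(M)-\tr(M)$ satisfies $\Phi\le 0$ by \eqref{eq:hightrace}. Since $\Phi(B)=0$, $B$ maximizes $\Phi$ over $\mathcal K$.
\begin{itemize}
\item I will consider perturbations $B+\eps E$ with $E$ symmetric. On the support of $B$, including the diagonal, $E$ is unrestricted in sign; off the support, $E_{ij}\ge0$.
\item Since $\theta_3$ and $\theta_1$ may be multiple eigenvalues, I will use Clarke generalized gradients. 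The gradient of $\theta_3$ at $B$ lies in $\{U\Gamma U\T\}$, where $U$ spans the $\theta_3$-eigenspace and $\Gamma\succeq0$ with $\tr\Gamma=1$. The same holds for $\theta_1$ with its eigenspace.
\item The Lagrange/KKT conditions should then give multipliers $\alpha\ge0$ (for $\Phi$) and $\beta\ge0$ (for the active trace constraint, if it is active), not both zero, such that
\[
G=\alpha\,(U\Gamma U\T - I)-\beta\,\bigl(I-\tfrac12 V\Delta V\T\bigr)
\]
has $G_{ij}=0$ wherever $B_{ij}>0$ (in particular on the diagonal) and $G_{ij}\le0$ wherever $B_{ij}=0$.
\end{itemize}
Every term of $G$ is a combination of $I$ and matrices supported inside eigenspaces of $B$, so $G$ commutes with $B$. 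Hence $X=-G$ is symmetric, nonnegative, and satisfies \eqref{eq:certificate}.

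\textbf{Step 2: $X\neq0$.} Nonvanishing is where Lemma~\ref{lem:positive-strict} enters. If $G=0$, then $I$ would equal a combination of $U\Gamma U\T$ and $V\Delta V\T$, which have rank at most $\dim(\text{eigenspaces})<5$. Under \eqref{eq:sign-B} this is impossible. In general, I will need to rule out the degenerate multiplier configuration separately, using that a perturbation of $B$ into the strictly positive interior would contradict Lemma~\ref{lem:positive-strict}.

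I expect Step 1--2 to be the main obstacle: making the nonsmooth KKT argument rigorous at multiple eigenvalues, and handling the possibly active second constraint $\tr=\theta_1/2$. What I would try first is to restrict to directions $E$ that preserve the multiplicity structure, or to replace $\theta_3$ by the sum $\theta_1+\theta_2+\theta_3$ minus $\theta_1+\theta_2$ as a difference of convex functions.

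\textbf{Step 3: support reduction.} The diagonal of $X$ is zero, the off-diagonal supports of $X$ and $B$ are disjoint, and the graph of $B$ is connected. I will exploit the commutation relation entrywise:
\[
\sum_k B_{ik}X_{kj}=\sum_k X_{ik}B_{kj}.
\]
All terms are nonnegative. Whenever one side contains a positive term and the other side has no available path of the corresponding type, this forces a zero and gives a contradiction. I will use this in the following sub-steps.
\begin{itemize}
\item First rule out isolated vertices of the graph of $X$. If row $i$ of $X$ vanished, then every $B$-neighbour of $i$ would have to be $X$-isolated, and propagation along the connected graph of $B$ would give $X=0$.
\item Next rule out triangles and vertices of $X$-degree at least $3$. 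Such a vertex would have at most one $B$-neighbour among the other four, and the diagonal identity $(BX)_{ii}=(XB)_{ii}$ combined with the $(i,j)$ identities should then fail.
\item The remaining graphs on five vertices with no isolated vertex and maximum degree at most $2$ are $C_5$, $P_5$, $K_2\cup P_3$, and $K_2\cup K_3$. The last is excluded by the triangle step. For $P_5$, I will compare the two endpoints via the $(1,5)$- and $(1,4)$-entries to force a contradiction.
\end{itemize}

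\textbf{Step 4: excluding $K_2\cup P_3$ under \eqref{eq:sign-B}.} The nonzero eigenvalues of a weighted $K_2\cup P_3$ are $\pm a,\pm b$, together with $0$. From Step 1, $X=(\alpha+\beta)I-\alpha U\Gamma U\T-\tfrac{\beta}{2}V\Delta V\T$, and $\theta_1$ is simple with Perron eigenvector $v$. Hence $X$ acts as a scalar on $\{v,U\}^{\perp}$, which has dimension at least $2$. Matching this with the spectrum $\{\pm a,\pm b,0\}$, I will show there is no room for a symmetric pair $\pm$ consistent with nonnegativity. In particular, $v>0$ must be an eigenvector of $X$, and by Perron--Frobenius this forces $X$ to be irreducible, which is impossible for the disconnected graph $K_2\cup P_3$. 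This leaves $C_5$.
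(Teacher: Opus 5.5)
\textbf{The main gap is in the support reduction (Steps~3--4), not in Steps~1--2, which you flagged.}

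\emph{Step~3 cannot exclude $K_2\cup K_3$.} Step~3 uses only entrywise consequences of $BX=XB$, together with nonnegativity, positive diagonal and irreducibility. Those facts alone do not exclude $K_2\cup K_3$. With $\nu,d,h,s>0$, $J$ the all-ones matrix and $\sigma=\left(\begin{smallmatrix}0&1\\1&0\end{smallmatrix}\right)$, take
\[
X=\begin{pmatrix}\nu\sigma&0\\0&\tfrac{\nu}{2}(J_3-I_3)\end{pmatrix},
\qquad
B=\begin{pmatrix}dI_2&sJ_{2\times3}\\ sJ_{3\times2}&hI_3\end{pmatrix}.
\]
Both diagonal blocks of $X$ have all row sums equal to $\nu$, so $BX=XB$. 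The off-diagonal supports are exactly complementary, and $B$ is irreducible with positive diagonal. What excludes this graph is the spectral hypothesis. For any $B$ commuting with a $K_2\cup K_3$-supported $X$, the paper's Part~2 shows $\Spec(B)=\{r_+,r_-,d,h,h\}$ with $r_++r_-=d+h$ and $\tr(B)=2d+3h$. So at most one eigenvalue can reach $\tr(B)$, contradicting $\theta_1\ge\theta_2\ge\theta_3=\tr(B)$. Your Step~3 never uses the spectrum of $B$, so ``the triangle step'' cannot deliver this.

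\emph{Step~4 has two defects.}
\begin{enumerate}
\item It relies on the KKT form of $X$, so at best it covers the particular $X$ built in Step~1. The lemma asserts the $C_5$ conclusion for \emph{every} $X$ satisfying \eqref{eq:certificate}.
\item Its final inference is false. A positive eigenvector does not make a nonnegative matrix irreducible: any block-diagonal matrix whose blocks share their Perron root has one. This is exactly the $K_2\cup P_3$ situation, where the paper shows $Xp=\nu p$ for the Perron vector $p>0$ of $B$, with both components of spectral radius $\nu$.
\end{enumerate}
For arbitrary $X$ you need something like the paper's Parts~3--4. The eigenspaces $E_\pm,E_0$ of $X$ are $B$-invariant, which forces the restrictions $B_\pm$ to share their diagonal, with off-diagonal entries $q>|t|$. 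Each placement of $\theta_2,\dots,\theta_5$ among $p_-,m_\pm,h$ is then eliminated using $\theta_1\le2\tr(B)$, $\tr(B)=\theta_3$, $\theta_4<0$ and primitivity.

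\textbf{Step~1 is a genuinely different and workable route.} $B$ maximizes $\theta_3-\tr$ on your feasible set by Theorem~\ref{thm:LS-hightrace}. Clarke's multiplier rule applies to these locally Lipschitz functions, and the Clarke subdifferential of $\theta_k$ at $B$ lies in $\{U\Gamma U\T:\Gamma\succeq0,\ \tr\Gamma=1\}$. So no multiplicity-preserving restriction is needed.

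However, your $\beta$-term has the wrong sign. With your $G$, summing the zero-diagonal conditions gives $4\alpha+\tfrac92\beta=0$, forcing $\alpha=\beta=0$. With the correct sign, $X=(\alpha-\beta)I_5-\alpha U\Gamma U\T+\tfrac{\beta}{2}vv\T$, and the same sum gives $\beta=\tfrac89\alpha>0$; in particular the trace constraint is necessarily active. Then $X$ acts as $\tfrac{\alpha}{9}$ on the orthogonal complement of $v$ and the $\theta_3$-eigenspace, which has dimension at least $2$. Hence $X\neq0$ without appealing to Lemma~\ref{lem:positive-strict}.

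The paper instead perturbs only through $e^{-tK}Be^{tK}$, which fixes the spectrum, and combines Lemma~\ref{lem:positive-strict} with a separation theorem. That avoids nonsmooth analysis. Your route, in exchange, yields the full spectrum of the particular $X$ it produces.

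\textbf{Two local repairs in Step~3.}
\begin{enumerate}
\item The diagonal identity $(BX)_{ii}=(XB)_{ii}$ is vacuous for symmetric $B,X$. A vertex $i$ of $X$-degree $3$ is instead excluded by the $(i,k)$ entry, where $k$ is its unique $B$-neighbour. That entry reads $\sum_jX_{ij}B_{jk}=0$, which isolates $\{i,k\}$ in the graph of $B$.
\item For the path with edges $12,23,34,45$, the useful entries are $(1,3)$ and $(3,5)$. They force $B_{14}=B_{25}=0$ and disconnect $\{2,4\}$ from $\{1,3,5\}$.
\end{enumerate}
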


For the matrices $B$ arising here, (\ref{eq:sign-B}) is the relevant case. 
Lemma \ref{lem:certificate} shows that the positive off-diagonal support graph of $X$ is isomorphic to the five-cycle $C_5$.  
This significantly narrows down the range of possible $X$ (and hence of possible $A$ and $B$).

The next lemma records the algebraic consequence of the five-cycle support and the commutation relation.  
It provides a more explicit relationship between $B$ and $X$. 

\begin{lemma}[Cycle-commutation representation]\label{lem:cycle-rep}
Let $B$ be an irreducible real symmetric entrywise-nonnegative $5\times5$ matrix with strictly positive diagonal, and let $X$ be a nonzero real symmetric entrywise-nonnegative matrix such that $BX=XB$ and that 
$B_{ij}X_{ij}=0$ for all $1\leq i,j\leq 5$. 
Assume that the positive off-diagonal support graph of $X$ is isomorphic to $C_5$.  Then there exist $a\in\mathbb R$ and $b>0$ such that
\begin{equation}\label{eq:cycle-rep}
B=aI_5+bX^2.
\end{equation}
Moreover, the positive off-diagonal support graphs of $B$ and $X$ are complementary five-cycles.
\end{lemma}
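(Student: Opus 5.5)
We work directly from the commutation relation, written entrywise along the cycle. After relabelling the vertices, the support of $X$ is the cycle $1\!-\!2\!-\!3\!-\!4\!-\!5\!-\!1$. Write $x_i=X_{i,i+1}>0$ (indices mod $5$), so all other entries of $X$ vanish, including its diagonal. The condition $B_{ij}X_{ij}=0$ forces $B_{i,i+1}=0$, so the support of $B$ lies in the diagonal together with the "chords" $\{i,i+2\}$. Write $d_i=B_{ii}>0$ and $y_i=B_{i,i+2}\ge 0$.

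The plan is to compute $(BX-XB)_{jk}$ for every pair $(j,k)$ and read off linear relations.

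\begin{itemize}
\item \emph{Entry $(i,i+1)$.} Only the diagonal of $B$ and the nonzero $X$-entries contribute, which gives $(d_i-d_{i+1})x_i$ plus chord contributions. I expect the chord terms here to cancel by the cycle geometry. Then $d_i=d_{i+1}$ for all $i$, so $B$ has constant diagonal $d$.
\item \emph{Entry $(i,i+2)$.} From $BX$ we get $B_{i,i+1}X_{i+1,i+2}+\dots$, but $B_{i,i+1}=0$. The surviving terms come from the chord entries of $B$ meeting edges of $X$. Concretely, $(BX)_{i,i+2}=\sum_k B_{ik}X_{k,i+2}$ with $k\in\{i+1,i+3\}$. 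Since $B_{i,i+1}=0$, this equals $B_{i,i+3}X_{i+3,i+2}=y_{i+3}x_{i+2}$ (as $\{i,i+3\}$ is the chord $\{i+3,i+5\}$). Symmetrically, $(XB)_{i,i+2}=X_{i,i-1}B_{i-1,i+2}=x_{i-1}y_{i-1}$ (chord $\{i-1,i+2\}=\{i+2,i+4\}$).
\end{itemize}

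So the relations take the cyclic form $y_{i+3}x_{i+2}=x_{i-1}y_{i-1}$, i.e. $y_{j}x_{j-1}=x_{j+1}y_{j+1}$ after reindexing (I will recheck the indices carefully).

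Next compare with $X^2$. We have $(X^2)_{ii}=x_{i-1}^2+x_i^2$, $(X^2)_{i,i+2}=x_ix_{i+1}$, and $(X^2)_{i,i+1}=0$. Hence $B=aI_5+bX^2$ is equivalent to two conditions: $y_i=b\,x_ix_{i+1}$ for all $i$ with a common $b$, and $d-a=b(x_{i-1}^2+x_i^2)$ for all $i$. For the first condition, the ratios $r_i=y_i/(x_ix_{i+1})$ should satisfy $r_i=r_{i+1}$ by the chord relations, so $b$ is a common value. Irreducibility of $B$ excludes $b=0$: if $b=0$ then all chords vanish, $B$ is diagonal, and $B$ is reducible. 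Hence $b>0$, and then every $y_i>0$.

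The harder point is the diagonal condition: $x_{i-1}^2+x_i^2$ must be independent of $i$. Constancy of $d$ alone does not give this, so it must come from the remaining part of $BX=XB$ that involves $d$ and the chords. I expect the $(i,i+1)$ entry to actually read
\[
(d_i-d_{i+1})x_i+y_{i-2}x_{i-2}\text{-type terms}-\dots=0,
\]
where the chord terms do \emph{not} cancel. Substituting $y_j=b\,x_jx_{j+1}$, this should become
\[
d_i-d_{i+1}=b\bigl[(x_{i-1}^2+x_i^2)-(x_i^2+x_{i+1}^2)\bigr]
\]
up to sign. Then constancy of $d_i-b(x_{i-1}^2+x_i^2)$ gives exactly $B-bX^2=aI_5$. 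So I would carry out the $(i,i+1)$ computation fully rather than assume $d$ is constant. Redoing all the index bookkeeping modulo $5$ correctly is the main obstacle.

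Finally, $y_i>0$ for all $i$ means that the support of $B$ is exactly the chord set, i.e. the pentagram, which is a five-cycle complementary to $C_5$.
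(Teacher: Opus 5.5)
Your plan is the paper's proof: relabel along the cycle, use the $(i,i+2)$ entries of $BX=XB$ to get $B_{i,i+2}=b\,x_ix_{i+1}$ with a common $b\ge 0$, use the $(i,i+1)$ entries to get $d_i-b(x_{i-1}^2+x_i^2)\equiv a$, compare with $X^2$, and rule out $b=0$ by irreducibility. Two index details need fixing before it is a complete argument.

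\emph{The $(i,i+2)$ relation.} Your own chord identification gives $B_{i-1,i+2}=B_{i+2,i+4}=y_{i+2}$, not $y_{i-1}$. So $(XB)_{i,i+2}=x_{i-1}y_{i+2}$, and the relation is $y_{i+3}x_{i+2}=x_{i-1}y_{i+2}$, i.e.\ $y_jx_{j-1}=x_{j+1}y_{j-1}$. This gives $r_j=r_{j-1}$ for $r_j=y_j/(x_jx_{j+1})$. The version you wrote, with $y_{j+1}$ on the right, would not force the ratios to be equal.

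\emph{The $(i,i+1)$ entry.} This entry reads $d_ix_i+y_ix_{i+1}=x_{i-1}y_{i-1}+x_id_{i+1}$. The chord terms do not cancel, so your first bullet (constant diagonal) and the condition ``$d-a=b(x_{i-1}^2+x_i^2)$'' must be discarded; in general the diagonal of $aI_5+bX^2$ is not constant. Substituting $y_j=b\,x_jx_{j+1}$ and dividing by $x_i>0$ gives exactly $d_i-d_{i+1}=b(x_{i-1}^2-x_{i+1}^2)$. This is your second guess, with the sign you wrote, and the rest of your conclusion then follows as in the paper.
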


Meanwhile, since $X$ has support $C_5$, its characteristic polynomial has several useful special properties, collected in the following lemma, whose proof is deferred to the appendix.
 
\begin{lemma}[Spectral identities for a positive weighted five-cycle]\label{lem:C5poly}
Let $X$ be a real symmetric $5\times5$ matrix with zero diagonal whose positive off-diagonal support is a five-cycle.  After a cyclic relabeling, write
\[
X=
\begin{pmatrix}
0&y_1&0&0&y_5\\
y_1&0&y_2&0&0\\
0&y_2&0&y_3&0\\
0&0&y_3&0&y_4\\
y_5&0&0&y_4&0
\end{pmatrix},
\qquad y_1,\ldots,y_5>0.
\]
We have the following claims. 
\begin{enumerate}[label=(\alph*),leftmargin=2.2em]
\item The characteristic polynomial of $X$ is
\begin{equation}\label{eq:C5poly}
p_X(t)=\det(tI_5-X)
=t^5-\left(\sum_{i=1}^5y_i^2\right)t^3
+\left(\sum_{i=1}^5y_i^2y_{i+2}^2\right)t
-2\prod_{i=1}^5y_i,
\end{equation}
where the indices are read modulo $5$ (e.g., $y_6 = y_1$, $y_7 = y_2$).
\item $X$ is nonsingular and $\det(X)=2\prod_{i=1}^5y_i>0$.   
Moreover, $X$ has exactly three positive and two negative eigenvalues.

\item $X$ has no pair of opposite nonzero eigenvalues.  That is, $r\ne0$ and $p_X(r)=0$ imply $p_X(-r)\ne0$.
\item Let $\mu_1,\ldots,\mu_5$ denote the eigenvalues of $X$ and define $c_m(\mu)=\sum_{j=1}^5\mu_j^m$. Then, \[
c_1(\mu)=c_3(\mu)=0, \qquad c_2(\mu)= 2\sum_{i=1}^5y_i^2, \qquad \prod_{j=1}^5\mu_j=2\prod_{i=1}^5y_i, \qquad 
\sum_{1\leq i<j<k\leq 5}\mu_i\mu_j\mu_k=0.
\]
\end{enumerate}
\end{lemma}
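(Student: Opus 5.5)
The plan is to prove (a) first, since (b)–(d) then follow from the coefficients. I will compute $\det(tI_5-X)$ with the permutation (Leibniz) expansion, organized by the cycle structure of the underlying graph.

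\textbf{Part (a).} For a symmetric matrix with zero diagonal supported on a graph $G$, the coefficient of $t^{5-k}$ in $\det(tI-X)$ is $(-1)^k$ times the sum of the principal $k\times k$ minors. Each such minor expands over permutations whose cycle decompositions use only edges of $G$. The permutations are of two kinds: transpositions along edges, and cyclic permutations along cycles of $G$. Since $C_5$ has no triangles or $4$-cycles, only the following terms survive:
\begin{itemize}
\item $k=2$: single edges, giving $-\sum_i y_i^2$.
\item $k=3$: no terms, since there is no triangle.
\item $k=4$: pairs of disjoint edges. These are exactly the pairs $\{i,i+2\}$ (indices mod $5$), giving $+\sum_i y_i^2y_{i+2}^2$.
\item $k=5$: a disjoint edge plus a $3$-vertex structure is impossible because there is no triangle. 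Only the two orientations of the $5$-cycle remain. Each is an even permutation contributing $\prod_i y_i$ to $\det X$, so $\det X=2\prod_i y_i$ and the constant term of $p_X$ is $-\det X$.
\end{itemize}
Tracking the signs $(-1)^k$ gives \eqref{eq:C5poly}.

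\textbf{Part (b).} Nonsingularity is immediate from $\det X=2\prod y_i>0$. For the inertia, I apply Descartes' rule of signs to $p_X$. The coefficient signs are $+,-,+,-$, with three sign changes, so $p_X$ has at most three positive roots. The polynomial $p_X(-t)=-t^5+(\sum y_i^2)t^3-(\sum y_i^2y_{i+2}^2)t-2\prod y_i$ has sign sequence $-,+,-,-$, with two changes, so there are at most two negative roots. All five roots are real and nonzero, so there are exactly three positive and two negative roots.

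\textbf{Part (c).} Suppose $p_X(r)=p_X(-r)=0$. Write $p_X(t)=q(t)-2\prod y_i$, where $q$ is odd. Adding the two equations gives $-4\prod y_i=0$, a contradiction.

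\textbf{Part (d).} I use Newton's identities with the elementary symmetric functions read off from (a):
\[
e_1=0,\qquad e_2=-\textstyle\sum y_i^2,\qquad e_3=0,\qquad e_4=\textstyle\sum y_i^2y_{i+2}^2,\qquad e_5=2\textstyle\prod y_i.
\]
These give $c_1=e_1=0$, $c_2=e_1^2-2e_2=2\sum y_i^2$, and $c_3=e_1^3-3e_1e_2+3e_3=0$. The equations $e_3=0$ and $e_5=2\prod y_i$ are precisely the stated product and triple-sum identities.

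The main obstacle is only the sign bookkeeping in (a), namely confirming that both orientations of the $5$-cycle contribute with the same sign. This holds because a $5$-cycle is an even permutation. The identities can then be checked against the trace identity $\tr(X^2)=2\sum y_i^2$.
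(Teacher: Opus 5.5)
Your proposal is correct. For (a), (c) and (d) it follows the paper's route. Grouping the Leibniz expansion by principal minors is a repackaging of the paper's direct expansion of $\det(tI_5-X)$ by cycle type, and (c) and (d) use the same odd-polynomial observation and Newton identities.

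The genuine difference is in the inertia claim of (b).

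\begin{itemize}
\item The paper deletes a vertex to obtain the $4\times4$ path submatrix $Y$. It shows $\det Y=y_1^2y_3^2>0$ and $DYD=-Y$ with $D=\diag(1,-1,1,-1)$, so $\Spec(Y)$ is symmetric about $0$. Cauchy interlacing then gives $\mu_1,\mu_2>0>\mu_4,\mu_5$, and $\det X>0$ forces $\mu_3>0$.
\item You instead apply Descartes' rule of signs to the strict sign patterns $+,-,+,-$ of $p_X(t)$ and $-,+,-,-$ of $p_X(-t)$. These cap the numbers of positive and negative roots, counted with multiplicity, at $3$ and $2$. Since $X$ is symmetric and nonsingular, all five roots are real and nonzero, so both caps are attained.
\end{itemize}

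Your route makes (b) an immediate corollary of (a) and needs no auxiliary submatrix or interlacing. The paper's route is longer but more structural: it places $\Spec(X)$ relative to the symmetric spectrum of a path and does not rely on the exact coefficient formula. Both arguments are complete.
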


\medskip
We are ready to prove Theorem~\ref{thm:main}. So far, we have shown that the spectrum of $B$ satisfies \eqref{B-spectrum} and that the spectrum of $X$ satisfies the properties in Lemma~\ref{lem:C5poly}. 
However, the spectra of $B$ and $X$ are explicitly connected through $B=aI_5+bX^2$. 
We will show that under this  identity, \eqref{B-spectrum} and Lemma~\ref{lem:C5poly} cannot hold simultaneously.

Recall that for any $\lambda \in \Wreg$, it satisfies \eqref{W-cond}. 
We split $\lambda \in \Wreg$ into three cases:  (I), (II), and (III). 
\begin{enumerate}[leftmargin=2.2em]
\item  (I). Repeated positive: $\lambda_2=\lambda_3$ and $4\lambda_1\le 9\lambda_3-S$;
\item  (II). Nonrepeated positive strict:  $\lambda_2>\lambda_3$ and $4\lambda_1<9\lambda_3-S$;
\item (III). Nonrepeated positive boundary:  $\lambda_2>\lambda_3$ and $4\lambda_1=9\lambda_3-S$.
\end{enumerate}
For each case, we will separately establish a contradiction between \eqref{B-spectrum} and Lemma~\ref{lem:C5poly}.

\subsection{Case (I): repeated positive eigenvalues}\label{subsec:case-repeated}
In this case, $\lambda_2=\lambda_3$. Hence, for the shifted matrix \(B\) in Lemma~\ref{lem:shiftedB}, it satisfies that
%%%%%%%%%%%%%%%%%
%%%%%%%%%%% 
\begin{equation} \label{B-spectrum-case(I)}
\theta_1>\theta_2= \theta_3=\tau>0>\theta_4\geq \theta_5, \qquad  \tau\geq \frac{\theta_1}{2}. 
\end{equation}
Lemma~\ref{lem:certificate} therefore yields a nonzero complementary commuting
matrix \(X\) whose positive off-diagonal support graph is \(C_5\), and
Lemma~\ref{lem:cycle-rep} gives
\begin{equation}\label{eq:Bpoly-rep-rs}
B=aI_5+bX^2,
\qquad b>0,
\end{equation}
and the off-diagonal support graphs of \(B\) and \(X\) are complementary
five-cycles. Moreover, Lemma~\ref{lem:C5poly} applies to \(X\). In particular, \(X\) is nonsingular,
has exactly three positive and two negative eigenvalues, and has no pair of
opposite nonzero eigenvalues.

Our proof has four steps: In the first step, we show that the spectrum of $X$ can be expressed as $|s_0|\cdot 
(x,y,z,-1,-1)$, for some $s_0\in\mathbb{R}$ and $x,y,z>0$. 
In the second step, we show that the possible range of $x$ is $x\in (1, 6/5)$. In the third step, we use the range of $x$ to show that the eigenvalues of $B$ must satisfy $\theta_5/\theta_3<-2$. 
In the last step, we show that $\theta_5/\theta_3<-2$ contradicts \eqref{B-spectrum-case(I)}.  

\medskip
\noindent
{\bf Step 1: the spectrum of $X$.} Let \(\mu_1\geq \mu_2\geq \ldots\geq \mu_5\) denote the eigenvalues of \(X\). By \eqref{eq:Bpoly-rep-rs}, the eigenvalues of \(B\) are $
\theta_i = a+b\mu_i^2$. Since $\theta_2=\theta_3$ and $b>0$, 
there are distinct indices \(i,j\) such that $
\mu_i^2=\mu_j^2$. 
Since \(X\) is nonsingular, both \(\mu_i\) and \(\mu_j\) are nonzero. By Lemma~\ref{lem:C5poly}, 
\(X\) has no pair of opposite nonzero eigenvalues. Therefore, we must have
\[
\mu_i=\mu_j; \qquad \mbox{and denote this repeated nonzero eigenvalue by \(s_0\)}. 
\]

We show that the sign of $s_0$ can only be negative. 
Suppose otherwise \(s_0>0\). By Lemma~\ref{lem:C5poly}, \(X\) has three positive and two negative
eigenvalues. After dividing its spectrum by \(s_0\) we may write
\[
\frac{\Spec(X)}{s_0}
=
(x,1,1,-y,-z),
\qquad x,y,z>0.
\]
Because the Perron eigenvalue of the irreducible nonnegative matrix \(X\) is
simple, the repeated eigenvalue \(s_0\) cannot be its Perron eigenvalue. This gives $x>1$. 
The identities $
\sum_{i=1}^5\mu_i=0$ and $
\sum_{1\leq i<j<k\leq5}\mu_i\mu_j\mu_k=0$ 
from Lemma~\ref{lem:C5poly} imply that 
\[
y+z=x+2,
\qquad
yz=\frac{2(x+1)^2}{x+2}.
\]
We then construct a quadratic polynomial with roots $y$ and $z$: 
\[
p(t)=t^2-(x+2)t+\frac{2(x+1)^2}{x+2}. 
\]
Note that $p(1)=x(x+1)/(x+2)>0$. It implies that the two roots can only be both larger than $1$ or both smaller than $1$.  Since $y+z=x+2>2$, we conclude that 
\[
y>1, \qquad z>1.
\]
Consequently, $
x^2$, $y^2$ and $z^2$ are all larger than $1$. 
By \eqref{eq:Bpoly-rep-rs}, this would give at least three eigenvalues of
\(B\) strictly larger than $
a+bs_0^2=\theta_2=\theta_3$.   
However, $\theta_2$ is the second largest eigenvalue, so \(\theta_1\) is the only eigenvalue of \(B\) strictly larger than
\(\theta_2\). This is impossible. Hence $s_0<0$. 

So far, we have shown that the two repeated eigenvalues are negative. By Lemma~\ref{lem:C5poly}, $X$ has three positive eigenvalues and two negative eigenvalues. We thereby divide the spectrum of \(X\) by $
|s_0|=-s_0$ 
and write
\begin{equation} \label{X-spectrum}
\frac{\Spec(X)}{|s_0|}
=
(x,y,z,-1,-1),
\qquad x,y,z>0.
\end{equation}
Without loss of generality, let $x|s_0|$ be the Perron eigenvalue of $X$. Then, $x\geq 1$. 
An equality \(x=1\) would give the opposite nonzero eigenvalues $
\pm |s_0|$, 
which contradicts Lemma~\ref{lem:C5poly}. Thus, $x>1$. 
With the parametrization in \eqref{X-spectrum}, the two identities $
\sum_{i=1}^5\mu_i=0$ and $
\sum_{1\leq i<j<k\leq5}\mu_i\mu_j\mu_k=0$ 
from Lemma~\ref{lem:C5poly} become 
\begin{equation}\label{yz-identity}
y+z=2 - x,
\qquad
yz=\frac{2(x-1)^2}{2-x}.
\end{equation}
In particular, $
y+z=2-x<1$. Thus, \(0<y,z<1\). Without loss of generality, suppose $y\geq z$. We have  shown that
\begin{equation}  \label{X-spectrum2}
x^2>1>y^2\geq z^2,
\end{equation}
The spectrum of $X$ is then described by \eqref{X-spectrum}, \eqref{yz-identity}, and \eqref{X-spectrum2}. 

\medskip
\noindent
{\bf Step 2: the range of $x$}. 
We have seen that $x>1$, but we also wish to derive an upper bound for $x$. Note that \eqref{B-spectrum-case(I)} implies
\begin{equation} \label{eq:ratio13<2}
\theta_1/\theta_3\leq 2. 
\end{equation}
Our plan is to express $\theta_1/\theta_3$ as a function of $x$ and then apply \eqref{eq:ratio13<2} to get an upper bound for $x$.

Let $q=b|s_0|^2>0$. By \eqref{X-spectrum} and the identity $B=aI_5+bX^2$,  the eigenvalues of $B$ are 
\begin{equation}\label{eq:theta-affine-new-rs}
\theta_1=a+qx^2,\qquad
\theta_2=\theta_3=a+q,\qquad
\theta_4=a+qy^2,\qquad
\theta_5=a+qz^2.
\end{equation}
By \eqref{B-spectrum-case(I)}, \(\theta_3=\tau=\sum_{k=1}^5\theta_k\). We plug in the expressions of $\theta_k$ from  \eqref{eq:theta-affine-new-rs} to obtain $
\theta_3
=
5a+q(x^2+2+y^2+z^2)$. 
By \eqref{eq:theta-affine-new-rs} again, $a=\theta_3-q$. We plug it into the above equation and solve 
\[
4\theta_3
=
q(3-x^2-y^2-z^2).
\]
Note that $y^2+z^2=(y+z)^2-2yz$. Combining this with \eqref{yz-identity} gives
\[
y^2+z^2 = (2-x)^2 - \frac{4(x-1)^2}{2-x} = \frac{(2-x)^3-4(x-1)^2}{2-x}.
\]
We plug $y^2+z^2$ into the above expression of $\theta_3$. It follows that 
\begin{equation}\label{eq:theta3-H-new-rs}
4\theta_3
=
q\,\frac{H(x)}{2-x},
\qquad
H(x):=2x^3-4x^2+x+2.
\end{equation}
Moreover, $\theta_1-\theta_3=q(x^2-1)$, as implied by \eqref{eq:theta-affine-new-rs}. It yields $\frac{\theta_1-\theta_3}{\theta_3}=\frac{4(2-x)(x^2-1)}{H(x)}$. 
Consequently, 
\begin{equation}\label{eq:R-new-rs}
\frac{\theta_1}{\theta_3}:=R(x)= 
1+\frac{4(2-x)(x^2-1)}{H(x)}.
\end{equation}

It remains to determine the admissible range of \(x\). Since \(y,z\) are real
and positive, we have a universal inequality: 
$(y+z)^2\geq4yz$. Combining this with \eqref{yz-identity} gives 
\[
(2-x)^2
\geq
\frac{8(x-1)^2}{2-x}.
\]
Since \(2-x>0\), it implies $
(2-x)^3-8(x-1)^2=
-x(x^2+2x-4)
\geq0$. In particular, $x^2+2x-4\leq 0$, which implies $x\leq \sqrt5-1$. 
Together with \(x>1\), this gives
\begin{equation}\label{eq:x-crude-new-rs}
1<x\leq\sqrt5-1.
\end{equation}
On this interval $(1, \sqrt{5}-1]$,
\[
R'(t)
= - \frac{8r_0(t)}{H(t)^2}, \qquad r_0(t):= 3t^3-6t^2+4t-2. 
\]
By direct calculations, $
r_0'(t)=(3t-2)^2>0$, and  $
r_0(\sqrt5-1)=-90+40\sqrt5<0$. 
Hence \(r_0(t)<0\), and therefore \(R'(t)>0\), for $t$  in the range specified by  
\eqref{eq:x-crude-new-rs}. In other words,
\[
\mbox{$R(\cdot)$ is strictly monotone increasing in $\bigl(1, \sqrt5-1\bigr]$}. 
\]
By \eqref{eq:ratio13<2}, $R(x)\leq 2$. However, a direct calculation gives $R(6/5)=18/7$, which is larger than $2$. 
Thus, the strict monotonicity of $R(\cdot)$ implies that 
\begin{equation}\label{eq:x-sharp-new-rs}
1<x< 6/5.
\end{equation}
In fact, we can obtain a more refined range of $x$ by solving $R(x)=2$. However, the upper bound $x<6/5$ is sufficient for our following steps.

\medskip
\noindent
{\bf Step 3: an upper bound for $\theta_5/\theta_3$}. 
By \eqref{eq:theta-affine-new-rs}, $\theta_3-\theta_5=q(1-z^2)$. Combining this with \eqref{eq:theta3-H-new-rs} gives 
\[
\frac{\theta_5}{\theta_3}= 1-\frac{\theta_3-\theta_5}{\theta_3}
=
1-\frac{4(2-x)(1-z^2)}{H(x)}. 
\]
Since \(y\geq z\) and
\(y+z=2-x\) (see \eqref{yz-identity}), we have $
z^2\leq\frac{(2-x)^2}{4}$. 
It follows that
\begin{equation}\label{eq:Phi-new-rs}
\frac{\theta_5}{\theta_3}\leq 
1-\frac{4(2-x)\left[1-(2-x)^2/4\right]}{H(x)}
=
\frac{x^3+2x^2-7x+2}
     {2x^3-4x^2+x+2}:=\Phi(x).
\end{equation}
We observe that 
\[
\Phi(x)+2
=
\frac{(x-1)(x+1)(5x-6)}{H(x)}.
\]
By \eqref{eq:theta3-H-new-rs}, \(H(x)>0\) (because $
\theta_3>0$ and $2-x>0$). 
Moreover, when $x$ satisfies \eqref{eq:x-sharp-new-rs}, 
$(x-1)(x+1)(5x-6)<0$, so that $\Phi(x)+2<0$. It follows that 
\begin{equation}\label{eq:theta5-lower-tail-rs}
\frac{\theta_5}{\theta_3}
\leq\Phi(x)<-2.
\end{equation}

\medskip
\noindent
{\bf Step 4: establishing the contradiction}. 
By \eqref{eq:theta5-lower-tail-rs} and the fact that $\theta_5<0<\theta_3$, we find that $|\theta_5|>2\theta_3$. Meanwhile, in \eqref{eq:ratio13<2}, we have seen that $\theta_1\leq 2\theta_3$. Together, they imply
\[
|\theta_5|>\theta_1.
\]
However, $\theta_1$ is the Perron eigenvalue of $B$. It determines the spectral radius of $B$, so that  $\theta_1=\max_i|\theta_i|$. This yields a contradiction.

\subsection{Case (II): nonrepeated positive eigenvalues under strict inequality}
\label{subsec:case-nonrep-strict}

In this case, $
\lambda_2>\lambda_3$, and  $4\lambda_1<9\lambda_3-S$. 
Let $B$ be the shifted matrix defined in Lemma~\ref{lem:shiftedB}, and
write $\tau=\tr(B)$. Lemma~\ref{lem:shiftedB} gives
\begin{equation}\label{B-spectrum-case(II)}
\theta_1>\theta_2>\theta_3=\tau>0>\theta_4\geq\theta_5, \qquad \tau>\frac{\theta_1}{2}. 
\end{equation}  
Lemma~\ref{lem:certificate} therefore yields a nonzero real symmetric
entrywise-nonnegative matrix $X$ such that $
BX=XB$ and $
B_{ij}X_{ij}=0$ for all $1\leq i,j\leq5$; moreover, 
the positive off-diagonal support graph of $X$ is $C_5$.
By Lemma~\ref{lem:cycle-rep}, there exist $a\in\mathbb R$ and $b>0$
such that $
B=aI_5+bX^2$. 
%and the positive off-diagonal support graphs of $B$ and $X$ are
%complementary five-cycles.

In contrast with Case~(I), we will no longer need the spectral identities of
Lemma~\ref{lem:C5poly}. The key additional fact in the present case is
that the high-trace inequality is strict: $\tau>\theta_1/2$. 

We now derive the contradiction. Since $B$ and $X$ are real symmetric and commute, they are
simultaneously orthogonally diagonalizable. Hence there exists a
common orthogonal matrix $Q$ such that 
\[
B=Q\diag(\theta_1,\theta_2,\theta_3,\theta_4,\theta_5)Q\T,
\qquad
X=Q\diag(x_1,x_2,x_3,x_4,x_5)Q\T.
\]
Here $x_i$ denotes the eigenvalue of $X$ paired with $\theta_i$.
Since $B_{ii}>0$ and $B_{ii}X_{ii}=0$, the matrix $X$ has zero
diagonal. Thus
\[
\tr(X)=\sum_{i=1}^5x_i=0.
\]
For $\eps>0$, define
\begin{equation}\label{eq:P-eps-case-II}
\alpha_\eps=\frac{\eps x_3}{4},
\qquad
P_\eps=B+\eps X+\alpha_\eps I_5.
\end{equation}
We show that, for all sufficiently small $\eps>0$, the matrix  $P_\eps$ satisfies 
\begin{equation}\label{case(II)-goal}
\mbox{$P_\eps$
is strictly positive}, \qquad \lambda_3(P_\eps)=\tr(P_\eps),
\qquad
\tr(P_\eps)>\frac{\lambda_1(P_\eps)}{2}.
\end{equation}
This will contradict Lemma~\ref{lem:positive-strict}.

We now show \eqref{case(II)-goal}. Consider the first claim. By
Lemma~\ref{lem:cycle-rep}, the off-diagonal support graphs of $B$ and
$X$ are complementary five-cycles. Hence, for every $i\neq j$,
exactly one of $B_{ij}$ and $X_{ij}$ is positive. Since $\eps>0$,
every off-diagonal entry of $
B+\eps X$ 
is therefore strictly positive. Moreover, $B_{ii}>0$ for every $i$,
while $\alpha_\eps\to0$ as $\eps\to0$. Thus, for all sufficiently
small $\eps>0$,
\[
(P_\eps)_{ii}=B_{ii}+\alpha_\eps>0.
\]
Consequently, $P_\eps$ is entrywise strictly positive.

Consider the second claim. The five eigenvalues of $P_\eps$ are
\[
\theta_i+\eps x_i+\alpha_\eps,
\qquad i=1,\ldots,5.
\]
By \eqref{B-spectrum-case(II)}, $
\theta_1>\theta_2>\theta_3>\theta_4\geq\theta_5$. 
Hence, for all sufficiently small $\eps>0$, the first three perturbed
eigenvalues remain the three largest, in the same order. In particular,
$\lambda_3(P_\eps)
=
\theta_3+\eps x_3+\alpha_\eps$. 
Using $\theta_3=\tau$ and
$\alpha_\eps=\eps x_3/4$, we obtain
\[
\lambda_3(P_\eps)
=
\tau+\frac{5\eps x_3}{4}.
\]
On the other hand, since $\tr(X)=0$,
\[
\tr(P_\eps)
=
\tr(B)+\eps\tr(X)+5\alpha_\eps
=
\tau+\frac{5\eps x_3}{4}.
\]
Comparing the above two equations gives $\lambda_3(P_\eps)=\tr(P_\eps)$. 

Consider the third claim. 
For sufficiently small $\eps$, the largest eigenvalue of $P_\eps$ is $
\lambda_1(P_\eps)
=
\theta_1+\eps x_1+\alpha_\eps$. 
Hence, it follows from \eqref{eq:P-eps-case-II} that 
\begin{align} \label{matrix-Peps}
2\tr(P_\eps)-\lambda_1(P_\eps)
&=
2(\tau+5\alpha_{\epsilon}) -(\theta_1
+\eps x_1+\alpha_\eps)\cr
&=
2\tau-\theta_1+9\alpha_\eps -\epsilon x_1 \cr
&= 
2\tau-\theta_1 +\eps\left(\frac{9x_3}{4}-x_1\right).
\end{align}
By \eqref{B-spectrum-case(II)}, the constant term
$2\tau-\theta_1$ is strictly positive. We do not know the sign of  $9x_3/4-x_1$, but as long as $\eps$ is sufficiently small, it can be guaranteed that $
2\tr(P_\eps)-\lambda_1(P_\eps)>0$. 
This completes the proof of \eqref{case(II)-goal} and establishes the contradiction.

\subsection{Case (III): nonrepeated positive eigenvalues on the equality boundary}
\label{subsec:case-nonrep-boundary}

In this case, $
\lambda_2>\lambda_3$, and $
4\lambda_1=9\lambda_3-S$. 
Let $B$ be the shifted matrix defined in Lemma~\ref{lem:shiftedB}, and write
$\tau=\tr(B)$. Lemma~\ref{lem:shiftedB} gives
\begin{equation}\label{B-spectrum-case(III)}
\theta_1>\theta_2>\theta_3=\tau>0>\theta_4\geq\theta_5, \qquad \tau = \frac{\theta_1}{2}.  
\end{equation}
Lemma~\ref{lem:certificate} yields a nonzero real symmetric
entrywise-nonnegative matrix $X$ such that $
BX=XB$, $
B_{ij}X_{ij}=0$ for all $1\leq i,j\leq5$, and the positive off-diagonal support graph of $X$ is $C_5$. By
Lemma~\ref{lem:cycle-rep}, there exist $a\in\mathbb R$ and $b>0$ such that $
B=aI_5+bX^2$, 
and the positive off-diagonal support graphs of $B$ and $X$ are complementary
five-cycles. Lemma~\ref{lem:C5poly} also applies to $X$.

The difference from Case~(II) is that the strict high-trace margin has
disappeared: here $2\tau-\theta_1=0$. Thus the perturbation argument from
Case~(II) cannot by itself give a contradiction, and we must use the spectral
identities of the weighted five-cycle.

\medskip
\noindent
{\bf Step 1: the spectrum of $X$.}
Since $B$ and $X$ are real symmetric matrices which commute, they are simultaneously
orthogonally diagonalizable. Therefore, there exists a common orthonormal eigenbasis such that the eigenvalues of $X$ are paired with the eigenvalues of $B$. We use
$x_i$ denote the eigenvalue of $X$ paired with $\theta_i$. From the identity $
B=aI_5+bX^2$, we have 
\begin{equation}\label{eq:paired-caseIII}
\theta_i=a+bx_i^2,
\qquad i=1,\ldots,5.
\end{equation}
Let $r>0$ be the Perron eigenvalue of $X$. Since the support graph of $X$ is
$C_5$, the matrix $X$ is irreducible and nonnegative. Hence $r$ is simple and
$|x_i|\leq r$ for every eigenvalue $x_i$ of $X$. Since $b>0$, the eigenvalue
$a+br^2$ is also the largest eigenvalue of $B$. Thus we have $x_1=r$. 
Combining \eqref{eq:paired-caseIII} with \eqref{B-spectrum-case(III)} gives
\begin{equation}\label{eq:abs-order-bn}
r=x_1>|x_2|>|x_3|>|x_4|\geq|x_5|.
\end{equation}

By Lemma~\ref{lem:C5poly}, $X$ has three positive eigenvalues and two negative eigenvalues. We have seen that $x_1$ is a positive eigenvalue. It remains to determine the signs of other eigenvalues. 

First, we determine the sign of $x_3$. Denote $
s=|x_3|$ and $Q=\sum_{i=1}^5x_i^2$ for brevity. Then, 
\[
\theta_1=a+br^2, \qquad \theta_3=a + bs^2, \qquad  \tr(B)=5a + b\sum_{i=1}^5x_i^2=5a + bQ. 
\]
By \eqref{B-spectrum-case(III)}, $\theta_3=\tr(B)$ and $\theta_1=2\theta_3$. These two equalities imply 
\[
4a+b(Q-s^2)=0, \qquad a+br^2=2(a+bs^2).
\] 
The second equality gives $a=b(r^2-2s^2)$. We plug it into the first equation to solve 
\begin{equation}\label{eq:9s-bn}
9s^2=Q+4r^2.
\end{equation}
Recall that $s=|x_3|$. By \eqref{eq:abs-order-bn}, $|x_2|>s$. It follows that $Q\geq |x_1|^2+2|x_3|^2>r^2+2s^2$. 
Therefore, \eqref{eq:9s-bn} implies $9s^2>5r^2+2s^2$, and hence  
\begin{equation}\label{eq:sbound-bn}
s> \sqrt{\frac57}  \,r>\frac{4r}{9}.
\end{equation}
Now, suppose the sign of $x_3$ is positive. For $\eps>0$, set $
\alpha_\eps=\frac{\eps x_3}{4}$ and $
P_\eps=B+\eps X+\alpha_\eps I_5$, which is the same as in \eqref{eq:P-eps-case-II}. 
As in Case~(II), for all sufficiently small $\eps>0$, the matrix $P_\eps$ is
entrywise strictly positive and the first three perturbed eigenvalues remain
in the same order. Using similar calculations as in \eqref{matrix-Peps}, we have 
\[
2\tr(P_\eps)-\lambda_1(P_\eps)
=
\eps\left(\frac{9x_3}{4}-x_1\right)= \eps\left(\frac{9x_3}{4}-r\right)>0.
\]
However, Lemma~\ref{lem:positive-strict} gives
$\lambda_3(P_\eps)<\tr(P_\eps)$, which is a contradiction. Therefore, $x_3>0$ is impossible. We conclude that 
\begin{equation} \label{caseIII-sign-x3}
x_3<0.
\end{equation}

Next, we determine the sign of $x_2$.  Suppose for contradiction that $x_2>0$.  By \eqref{eq:abs-order-bn}, 
 $x_2>|x_3|=s$. By Lemma~\ref{lem:C5poly}, $X$ has exactly
three positive and two negative eigenvalues. Thus we may write its eigenvalues
as
\[
x_1=r,\quad  x_2,\quad x_4=p,\quad x_3=-s, \quad x_5=-u,
\qquad \mbox{for some }p,u>0.
\]
Since the support of $X$ is $C_5$, this matrix is irreducible and nonnegative.
Therefore every eigenvalue other than the Perron eigenvalue $r$ is strictly smaller than
$r$ in absolute value. This implies $u<r$. Since $X$ has zero diagonal, $
\tr(X)=0$. Therefore, $r+x_2+p=s+u$. 
But the left-hand side is strictly larger than $r+s$ (because $x_2>s$), whereas the right-hand
side is strictly smaller than $r+s$. This yields a contradiction. We thus conclude that 
\begin{equation} \label{caseIII-sign-x2}
x_2<0
\end{equation}

Since $x_2$ and $x_3$ are negative and $x_1$ is positive, we immediately know that $x_4$ and $x_5$ are positive (because $X$ has exactly three positive eigenvalues and two negative eigenvalues). 
Given \eqref{eq:abs-order-bn} and the signs of eigenvalues and noting that $\theta_2>\theta_3$, we re-parametrize them by  
\begin{equation}\label{eq:Xroots-bn}
(x_1, x_2, x_3, x_4, x_5)=(r, -u, -v, p, q_0),\qquad
\mbox{where}\quad r>u>v>p\geq q_0>0.
\end{equation}

\medskip
\noindent
{\bf Step 2: spectral identities for $X$.}
We collect a few useful identities for the values $u,v,p,r,q_0$. 
From \eqref{eq:paired-caseIII} and $\theta_1=2\theta_3$ (see \eqref{B-spectrum-case(III)}), we obtain $
a+br^2=2(a+bv^2)$. It leads to 
\begin{equation}\label{eq:a-v-bn}
a=b(r^2-2v^2).
\end{equation}
Also, since $\tr(B)=\theta_3$, we have $\theta_1+\theta_2+\theta_4+\theta_5=0$. Combining it with \eqref{eq:paired-caseIII} gives $4a + b(r^2+u^2+p^2+q_0^2)=0$. 
Substituting the expression of $a$ in \eqref{eq:a-v-bn} gives the following  identity:
\begin{equation}\label{eq:balance-bn}
8v^2=5r^2+u^2+p^2+q_0^2.
\end{equation}
We will refer to \eqref{eq:balance-bn} as the ``balance identity". This is a special property unique to Case (III), which lies on the equality boundary. 

We then apply the spectral identities in 
Lemma~\ref{lem:C5poly}. Let $X$ be parameterized as in Lemma~\ref{lem:C5poly}, with nonzero entries $y_1, \ldots, y_5$. Meanwhile, $X$ has eigenvalues
$r,-u,-v,p,q_0$.    Applying the first four equalities in part (d)  of Lemma~\ref{lem:C5poly}, it follows that 
\begin{align}
& r-u-v+p+q_0=0, \qquad 
r^3-u^3-v^3+p^3+q_0^3=0,\cr
& r^2+u^2+v^2+p^2+q_0^2=2\sum_{i=1}^5y_i^2,
\qquad ruvpq_0=2\prod_{i=1}^5y_i.\label{eq:X-spectral-identities}
\end{align}

\medskip
\noindent
{\bf Step 3: a lower bound for $\prod_{i=1}^5 y_i$}. 
We aim to derive a lower bound for $\prod_{i=1}^5 y_i$ in terms of $(p+q_0)^2$. 
Since $B$ has strictly positive diagonal entries and $B=aI_5+bX^2$, we have 
\[
B_{ii}=a+b(y_{i-1}^2+y_i^2)>0,
\qquad \mbox{for all } i\in\mathbb Z/5\mathbb Z.
\]
Here, the indices should be read modulo 5. 
Plugging in $a=b(r^2-2v^2)$ as in 
\eqref{eq:a-v-bn} and using $b>0$, we obtain
\[
y_{i-1}^2+y_i^2>2v^2-r^2,
\qquad \mbox{for all }i\in\mathbb Z/5\mathbb Z.
\]
Fix an index $i$. Since $y_j=y_{j+5}$ (i.e., the indices are read modulo 5), we have
\begin{equation} \label{eq:neighbor-direct-bn}
3(2v^2-r^2) <  \sum_{k\in \{0, 1, 3\}}(y_{i+k-1}^2+y_{i+k}^2) = y_i^2+\sum_{\ell=-1}^3 y_{i+\ell}^2 = y_i^2+\sum_{j=1}^5y_j^2. 
\end{equation}
By \eqref{eq:X-spectral-identities}, $\sum_{j=1}^5y_j^2=\frac{1}{2}(r^2+u^2+v^2+p^2+q_0^2)$.
We apply the balance identity in \eqref{eq:balance-bn} to eliminate $u^2, p^2, q_0^2$. It gives 
\begin{equation}\label{eq:sum-y-direct-bn}
\sum_{j=1}^5y_j^2=\frac{9v^2-4r^2}{2}.
\end{equation}
Combining \eqref{eq:neighbor-direct-bn} and \eqref{eq:sum-y-direct-bn} gives 
\begin{equation}\label{eq:yi-first-bn}
y_i^2>\frac{3v^2-2r^2}{2},
\qquad \mbox{for each of } i=1,\ldots,5.
\end{equation}
We apply the balance identity in \eqref{eq:balance-bn} again. It gives $8v^2
=5r^2+u^2+p^2+q_0^2$. By \eqref{eq:abs-order-bn}, $u>v$. We then have 
 $7v^2>5r^2+p^2+q_0^2$, which gives $
v^2>\frac{5r^2+p^2+q_0^2}{7}$. 
Substituting this into \eqref{eq:yi-first-bn} yields
\begin{equation}\label{eq:yi-refined-bn}
y_i^2>
\frac{r^2+3p^2+3q_0^2}{14},
\qquad i=1,\ldots,5.
\end{equation}
This gives a lower bound for $y_i^2$ in terms $r^2$, $p^2$, and $q_0^2$, but our goal is to obtain a lower bound in terms of $(p+q_0)^2$. For this purpose, we need the following inequalities:
\begin{equation}\label{eq:two-spectral-estimates-bn}
r^2>\frac76(p+q_0)^2,
\qquad
pq_0<\frac{16}{343}(p+q_0)^2.
\end{equation}
Suppose \eqref{eq:two-spectral-estimates-bn} is true. Then, 
\[
p^2+q_0^2
=
(p+q_0)^2-2pq_0
>
\frac{311}{343}(p+q_0)^2. 
\]
We plug this result and the lower bound on $r^2$ in \eqref{eq:two-spectral-estimates-bn} into  \eqref{eq:yi-refined-bn}.  It follows that 
\begin{align*}
y_i^2
&>
\frac1{14}
\left(
\frac76+3\frac{311}{343}
\right)
(p+q_0)^2 =
\frac{7999}{28812}(p+q_0)^2
>
\frac{13}{47}(p+q_0)^2.
\end{align*}
Thus
\begin{equation}\label{eq:prod-lower-new-bn}
\prod_{i=1}^5y_i^2
>
(p+q_0)^{10}
\left(\frac{13}{47}\right)^5.
\end{equation}
This provides the desirable lower bound. 

It remains to show \eqref{eq:two-spectral-estimates-bn}. Write for brevity 
\[
m=p+q_0,  \qquad T=\frac{r}{m},
\qquad
\eta=\frac{u-v}{m},
\qquad
\gamma=\frac{pq_0}{m^2}.
\]
Then, \eqref{eq:two-spectral-estimates-bn} translate to 
\begin{equation}\label{eq:two-spectral-estimates-bn-new}
T^2 >\frac76,
\qquad
\gamma <\frac{16}{343}. 
\end{equation}
From now on, we aim to lower bound $T$ and upper bound $\gamma$. 

To show \eqref{eq:two-spectral-estimates-bn-new}, the key is to obtain some equalities about $T$ and $\gamma$. 
First, we use the spectral identities in \eqref{eq:X-spectral-identities}. It implies $r-u-v+p+q_0=0$. Therefore,  
\begin{equation}\label{eq:uvsum-direct-bn}
u+v=r+m.
\end{equation}
By \eqref{eq:X-spectral-identities} again, $r^3-u^3-v^3+p^3+q_0^3=0$. Note that for real values $z_1, z_2$, $z_1^3+z_2^3=(z_1+z_2)^3-3z_1z_2(z_1+z_2)$. As a result, 
\begin{align*}
(u^3+v^3)-(p^3+q_0^3)  & = [(u+v)^3-3uv(u+v)]- (m^3-3pq_0m) \cr
&= [(r+m)^3- 3uv(m+r)]- (m^3 -3 pq_0m)\cr
&=  r^3 + 3rm(m+r)-3uv(m+r)+3pq_0m. 
\end{align*}
Plugging them into $r^3-u^3-v^3+p^3+q_0^3=0$ gives 
\begin{equation}\label{eq:uvprod-direct-bn}
uv=rm+\frac{pq_0m}{r+m}.
\end{equation}
Note that $(u-v)^2=(u+v)^2-4uv$. Combining this with \eqref{eq:uvsum-direct-bn} and \eqref{eq:uvprod-direct-bn}, we have 
\[
(u-v)^2=(r+m)^2-4\Bigl(rm+\frac{pq_0m}{r+m}\Bigr)=(r-m)^2-\frac{4pq_0m}{r+m}.
\]
Dividing the above equation 
by $m^2$ on  both sides  gives 
\begin{equation}\label{eq:eta-direct-bn}
\eta^2=(T-1)^2-\frac{4\gamma}{T+1}.
\end{equation}
Next, we invoke the balance identity in \eqref{eq:balance-bn}, which can be re-written as $8\frac{v^2}{m^2}=5T^2+\frac{u^2}{m^2}+\frac{p^2+q_0^2}{m^2}$. 
We substitute
\[
\frac{u}{m}=\frac{(r+m)+(u-v)}{2m}=\frac{T+1+\eta}{2},
\qquad
\frac{v}{m}=\frac{T+1-\eta}{2},
\qquad
\frac{p^2+q_0^2}{m^2}=1-2\gamma
\]
into the balance identity, and then use \eqref{eq:eta-direct-bn} to eliminate $(T-1)^2$. It gives
\begin{equation}\label{eq:balance-normalized-direct-bn}
18(T+1)\eta
=
-6T^2+10+
\left(8-\frac{28}{T+1}\right)\gamma.
\end{equation}

We now use \eqref{eq:eta-direct-bn}-\eqref{eq:balance-normalized-direct-bn} to show \eqref{eq:two-spectral-estimates-bn-new}. By definition, $\gamma\leq \frac{(p+q_0)^2}{4m^2}=\frac{1}{4}$. Moreover, since $r>u>v$, \eqref{eq:uvsum-direct-bn}  gives $u-v<r-v=u-m<r-m$. It follows that 
\begin{equation}\label{eq:ratio-diff-direct-bn}
0<\eta<T-1, \qquad 0 < \gamma\leq 1/4. 
\end{equation}
If $T\geq5/2$, then $0< 8-\frac{28}{T+1}\leq 8$. The last term on the right-hand side of
\eqref{eq:balance-normalized-direct-bn} is at most $8\gamma\leq 2$. Hence,  the right-hand side
is at most $-6T^2+12<0$, whereas the left-hand side is positive. This is impossible. Therefore, $
T<5/2$. 
In this range, $
8-\frac{28}{T+1}<0$, and the right hand side of \eqref{eq:balance-normalized-direct-bn} is at most $10-6T^2$. Since the left hand side is positive, we immediately have $T^2<5/3$. Thus, 
\begin{equation}\label{eq:T-crude-direct-bn}
1<T<\sqrt{\frac53}.
\end{equation}
Note that \eqref{eq:eta-direct-bn} gives $\eta^2=(T-1)^2-\frac{4\gamma}{T+1}$ and 
\eqref{eq:balance-normalized-direct-bn} gives $18(T+1)\eta=-6T^2+10-(5-2T)\frac{4\gamma}{T+1}$. 
We multiply the first equality by $(5-2T)$ and subtract it from the second equality. This eliminates $\gamma$ and yields
\begin{equation}\label{eq:C-direct-bn}
18(T+1)\eta-(5-2T)\eta^2= 2T^3-15T^2+12T+5.
\end{equation}
Let $C(T)=18(T+1)\eta-(5-2T)\eta^2$. 
For each $t$ in the range of \eqref{eq:T-crude-direct-bn}, define
\[
f_t(\xi)=18(t+1)\xi-(5-2t)\xi^2.
\]
For $0<\xi<t-1$, $
f_t'(\xi)
=
18(t+1)-2(5-2t)\xi > 18(t+1)-2(5-2t)(t-1)
=
4t^2+4t+28>0$. 
Hence $f_t(\xi)$ is strictly increasing for $\xi\in (0,t-1)$. Since  $0<\eta<T-1$ (see \eqref{eq:ratio-diff-direct-bn}), we have 
\begin{equation} \label{fT(T-1)}
C(T)=f_T(\eta) <f_T(T-1).
\end{equation}
Note that $f_T(T-1)=18(T+1)(T-1)-(5-2T)(T-1)^2$. Meanwhile, \eqref{eq:C-direct-bn} implies $C(T)=2T^3-15T^2+12T+5$. 
A direct calculation gives
\[
f_T(T-1)-C(T)=24T^2-28. 
\]
Therefore, \eqref{fT(T-1)} implies $T^2>\frac76$. 
This proves the first inequality in \eqref{eq:two-spectral-estimates-bn-new}.

Moreover, $C(T)=f_T(\eta)>f_T(0)=0$. On the interval
\eqref{eq:T-crude-direct-bn}, $
C'(T)=6T^2-30T+12<0$, indicating that $C(T)$ is strictly monotone decreasing on this interval. Meanwhile, 
$
C(9/7)=-\frac{40}{343}<0$. It implies that  $T<9/7$. Together with $T^2>7/6$, we have 
\begin{equation}\label{case(III)-T-range}
\sqrt{7/6}<T<9/7. 
\end{equation}
Using \eqref{eq:eta-direct-bn} and $\eta>0$, we can deduce
\begin{equation} \label{case(III)-gamma-range}
\gamma<\frac{(T+1)(T-1)^2}{4}<\frac{(9/7+1)(9/7-1)^2}{4}=\frac{16}{343}.
\end{equation}
This proves the second inequality in \eqref{eq:two-spectral-estimates-bn-new}. Step~3 is now complete.

\medskip
\noindent
{\bf Step 4: an upper bound for $\prod_{i=1}^5 y_i$ and the contradiction.}
By one of the spectral identities in \eqref{eq:X-spectral-identities}, $ruvpq_0=2\prod_{i=1}^5y_i$. Recall that $m=p+q_0$, $T=\frac{r}{m}$, and $\gamma=\frac{pq_0}{m^2}$. We immediately have
\begin{equation}\label{case(III)-final}
\prod_{i=1}^5y_i^2 =
\left(\frac{ruvpq_0}{2}\right)^2 = \frac{m^{10}}{2^2}T^2\gamma^2 \Bigl(\frac{uv}{m^2}\Bigr)^2. 
\end{equation}
By \eqref{eq:uvsum-direct-bn} and $u\neq v$ (see \eqref{eq:Xroots-bn}), we have $uv\leq (u+v)^2/4 \leq \frac{(r+m)^2}{4}\leq \frac{m^2}{4}(T+1)^2$. By \eqref{case(III)-T-range}, $T<9/7$. Combining these arguments gives 
\[
\frac{uv}{m^2}<\frac{64}{49}. 
\]
We plug this upper bound into \eqref{case(III)-final} and also apply the upper bounds of $T$ and $\gamma$ in \eqref{case(III)-T-range}-\eqref{case(III)-gamma-range}. It follows that 
\begin{align}
\prod_{i=1}^5y_i^2< 
(p+q_0)^{10}
\left(\frac{4608}{117649}\right)^2 <
(p+q_0)^{10}
\left(\frac{2}{51}\right)^2.
\label{eq:prod-upper-new-bn}
\end{align}
However, this contradicts the lower bound for $\prod_{i=1}^5y_i^2$ in \eqref{eq:prod-lower-new-bn}, because 
$\left(\frac{13}{47}\right)^5
>
\left(\frac{2}{51}\right)^2$.

\begin{proof}[Conclusion of the proof of Theorem~\ref{thm:main}]
Every $\lambda\in \Wreg$ falls into one of Cases~(I)--(III). The analyses in Sections~\ref{subsec:case-repeated}-\ref{subsec:case-nonrep-boundary} exclude all possibilities. Hence, none of $\lambda\in \Wreg$ is
symmetrically nonnegatively realizable.
\end{proof}

\section{Discussion}

The symmetric nonnegative inverse eigenvalue problem is a difficult but
interesting problem, with applications such as network modeling. In this
paper, we study the residual order-five problem and identify the explicit
nonempty region $\Wreg=\{\lambda\in\R:4\lambda_1\le 9\lambda_3-S\}$, 
in which no symmetric nonnegative realization exists. To the best of our
knowledge, this impossibility region has not been identified previously.

The proof combines several ideas. The main device is the diagonal shift
$B=A+\frac{\lambda_3-S}{4}I_5$,  
which moves the original low-trace problem to the critical high-trace level
$\lambda_3(B)=\operatorname{trace}(B)$.  
At this critical level, we construct a nonzero nonnegative matrix $X$ that
commutes with $B$ and is supported on zero entries of $B$. Using this
construction, we show that the support graph of $X$ is a five-cycle and,
in the relevant cases, that
$B=aI_5+bX^2$ 
for suitable scalars $a$ and $b$. These relations allow us to connect the
spectra of $B$ and $X$ and ultimately derive the desired contradiction.

Several questions remain open. First, the complementary region
$\R\setminus\Wreg$ remains unclear, and it would be interesting to obtain a
finer description of this region, either through additional impossibility
conditions or through new realization constructions. Second, the
$6\times6$ SNIEP is a natural next case: one may ask whether analogous
residual regions can be identified and whether the shift, commutation, and
support-reduction ideas developed here remain useful in higher dimensions.
Finally, the connection in Section~\ref{sec:network} motivates further study
of the symmetric doubly stochastic inverse eigenvalue problem (SDIEP),
including the special spectral families arising from network modeling.

% REVISION: acknowledgements made unnumbered.
\section*{Acknowledgements} 
The authors used OpenAI’s GPT-5.6 in preparing this manuscript. All mathematical arguments were independently verified by the authors.

\section{Appendix}\label{sec:appendix}

This section contains the proofs of Lemmas~\ref{lem:shiftedB}--\ref{lem:C5poly}.  The only external theorem used in these proofs is the following high-trace result of \citet{LoewySpector2017}.  We record it here rather than in the main sections.

\begin{theorem}[Loewy--Spector high-trace condition]\label{thm:LS-hightrace} % REVISION: added stable cross-reference label.
Suppose $C$ is a real symmetric entrywise-nonnegative $5\times5$ matrix with ordered eigenvalues $
\eta_1\ge\eta_2\ge\eta_3\ge\eta_4\ge\eta_5$ and trace $\tau_C$.  If $\tau_C\ge\frac{\eta_1}{2}$, then $
\eta_3\le\tau_C$.
\end{theorem}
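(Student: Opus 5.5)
The statement is the Loewy–Spector high-trace theorem, an external result. If I had to reprove it rather than cite it, I would argue by contradiction through an extremal (variational) argument over a compact set. Since the claim is invariant under positive scaling, normalize $\eta_1=1$. Consider the set $\mathcal K$ of real symmetric entrywise-nonnegative $5\times5$ matrices $C$ with $\eta_1(C)=1$ and $\tr(C)\ge\frac12$. It is compact, because all entries are bounded by the spectral radius $1$. Define $F(C)=\eta_3(C)-\tr(C)$, which is continuous. If the theorem failed, then $\max_{\mathcal K}F>0$, and I would fix a maximizer $C$, choosing one with as many zero entries as possible.

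Next I would simplify the spectral shape at $C$. A shift $C\mapsto C+cI_5$ followed by renormalization changes $F$ monotonically; since $\eta_3-\tr$ decreases by $4c$, shifts with $c<0$ are the useful ones. Hence at the maximizer either the trace constraint is active, $\tr(C)=\frac12$, or some diagonal entry vanishes. A standard perturbation (splitting a repeated eigenvalue by a small symmetric nonnegative perturbation) should let me assume $\eta_2>\eta_3>\eta_4$. Then $\eta_3$ is differentiable at $C$, with derivative $E\mapsto v^{\mathsf T}Ev$, where $v$ is a unit $\eta_3$-eigenvector. Similarly $\eta_1$ has derivative $E\mapsto w^{\mathsf T}Ew$, where $w\ge0$ is the Perron vector.

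The first-order (KKT) conditions come from admissible directions $E$. For a positive off-diagonal entry, $E=\pm(e_ie_j^{\mathsf T}+e_je_i^{\mathsf T})$ is admissible, and I expect this to force $v_iv_j=\kappa\, w_iw_j$ for a multiplier $\kappa$ tied to the constraint $\eta_1=1$. For a positive diagonal entry I expect $v_i^2-1=\kappa w_i^2+\mu$, with $\mu$ the multiplier for the trace constraint. For zero entries only one sign of $E$ is admissible, which gives inequalities. Combining these with $w>0$ (on the irreducible component carrying $\eta_1$) and with sign counting should bound the number of positive entries of $C$. The sign counting uses three facts: $v\perp w$; $\eta_4+\eta_5=\tr(C)-1-\eta_2-\eta_3<-\eta_2-\frac12$, so $C$ has two strongly negative eigenvalues; and Cauchy interlacing on principal submatrices.

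What remains is a finite case analysis over support graphs on five vertices. I would first use interlacing on $4\times4$ and $3\times3$ principal submatrices to exclude supports in which some vertex is isolated, since in that case the order-four theory applies and gives the result directly. For the remaining graphs I would combine the KKT sign pattern of $v$ with the trace identity to get a contradiction with $\eta_3>\tr(C)$.

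I expect the main obstacle to be this last step, together with the degenerate situations where $\eta_3$ is repeated. There the derivative becomes a compression onto an eigenspace and the KKT conditions weaken to matrix inequalities. My first attempt would be to perturb so that multiplicity is avoided while keeping $F>0$.
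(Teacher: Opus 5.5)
The paper does not prove this statement. It is imported from \citet{LoewySpector2017}, and the paper's own machinery (Lemma~\ref{lem:positive-strict}, and through it Part~1 of the proof of Lemma~\ref{lem:certificate}) is built on it, so that machinery cannot be used as it stands. As a standalone argument, your proposal has a genuine gap: the step that carries the whole content of the theorem is announced but never carried out. You do not show how the stationarity conditions, $v\perp w$, interlacing and the trace identity actually force $\eta_3\le\tr(C)$ for any particular support graph. Phrases such as ``sign counting should bound the number of positive entries'' and ``combine the KKT sign pattern of $v$ with the trace identity'' are placeholders for what is, in the original work, a substantial case analysis. The parts you do execute are correct but elementary: compactness, the dichotomy $\tr(C)=\tfrac12$ or some $C_{ii}=0$ obtained from negative shifts, and the disposal of supports with an isolated vertex. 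The last needs only Perron dominance, since the third eigenvalue of any $4\times4$ symmetric nonnegative matrix is at most its trace.

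The reduction to a simple $\eta_3$ also fails as stated. The identities $v_iv_j=\kappa w_iw_j$ and $v_i^2-1=\kappa w_i^2+\mu$ hold only at a maximizer. A perturbation that splits $\eta_2=\eta_3$ produces a matrix that is no longer a maximizer, so the first-order conditions are lost, and keeping $F>0$ does not restore them. Once $F>0$ one checks directly that $\eta_1>\eta_2$ and $\eta_3>0>\eta_4$, so $\eta_2=\eta_3$ is the only possible degeneracy. But nothing excludes it, and coalescence at an optimum is a common outcome in eigenvalue optimization; the paper itself needs a separate argument, Case~(I), when $\theta_2=\theta_3$. At such a point the generalized gradient of $\eta_3$ is $\{VZV^{\mathsf T}: Z\succeq0,\ \tr Z=1\}$, with $V$ spanning the two-dimensional eigenspace. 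Stationarity then involves a matrix $G=VZV^{\mathsf T}$ of possibly rank two in place of $vv^{\mathsf T}$, and any sign counting that relies on $G_{ij}=v_iv_j$ is unavailable.

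If you pursue this route, isospectral perturbations $e^{-tK}Ce^{tK}$ avoid eigenvalue differentiability altogether. Run Part~1 of the proof of Lemma~\ref{lem:certificate} at your maximizer, with any zero diagonal positions added to the constrained entries, and replace the appeal to Lemma~\ref{lem:positive-strict} by maximality of $F$. The point is that an entrywise positive orthogonal conjugate, diagonalized as $Q\,\diag(\eta_1,\ldots,\eta_5)\,Q^{\mathsf T}$, could be moved to $Q\,\diag(\eta_1,\eta_2+\eps,\eta_3+\eps,\eta_4-\eps,\eta_5-\eps)\,Q^{\mathsf T}$, which stays feasible and raises $F$ by $\eps$. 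This yields a nonzero complementary commuting matrix at the maximizer without assuming the theorem. The support classification and the final spectral contradiction would still have to be done, and that is where the real work lies.
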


\subsection{Proof of Lemma~\ref{lem:shiftedB}}
Since $S<\lambda_3$, we have $c>0$.
We first prove that $A$ is irreducible.  Suppose otherwise.  Since $A$ is symmetric, after a simultaneous permutation of rows and columns we may write $A$ as a direct sum of irreducible entrywise-nonnegative blocks.  Every such block has nonnegative trace, and its Perron root $\lambda_1$ is a nonnegative eigenvalue of that block whose modulus dominates that of every other eigenvalue in the block.

If $\lambda_4$ and $\lambda_5$ lie in distinct blocks, each of those two blocks must contain a nonnegative Perron eigenvalue at least as large in modulus as the negative eigenvalue in that block.  These two Perron eigenvalues are two distinct members of $\{\lambda_1,\lambda_2,\lambda_3\}$.  Pairing each negative eigenvalue with the Perron root of its block gives a nonnegative contribution to the trace.  The remaining positive eigenvalue is at least $\lambda_3$, and every remaining block has nonnegative trace.  Hence $S\ge\lambda_3$, contradicting $S<\lambda_3$.

If $\lambda_4$ and $\lambda_5$ lie in the same block and some positive eigenvalue lies outside that block, then the block containing the two negative eigenvalues has nonnegative trace, while the blocks outside it contain only positive eigenvalues and therefore contribute at least $\lambda_3$ in total.  Again $S\ge\lambda_3$, a contradiction.  Thus all three positive eigenvalues and both negative eigenvalues lie in a single block.  Hence $A$ is irreducible.

We now prove the claims. Recall that $B=A+cI_5$, where $c>0$. Therefore, the irreducibility of $A$ implies that $B$ is irreducible as well. The entry-wise non-negativity of $A$ implies that $B$ has strictly positive diagonal entries. The eigenvalues of $B$ are $\theta_i=\lambda_i+c$. Therefore, $
\theta_1>\theta_2\geq\theta_3\geq \theta_4\geq\theta_5$. 
Write $\tau=\tr(B)$. It remains to show:
\begin{equation} \label{lem-shiftB-0}
\tau=\theta_3, \qquad \tau\geq \theta_1/2, \qquad \theta_4<0. 
\end{equation}
Recall that $\theta_i=\lambda_i+c$, $S=\tr(A)=\sum_{k=1}^5\lambda_k$, and $c=\frac{\lambda_3-S}{4}$. By direct calculations,  
\[
\tau =\sum_{k=1}^5\theta_k =S+5c
=\frac{5\lambda_3-S}{4}
=\lambda_3+c
=\theta_3
\]
This proves the first claim in \eqref{lem-shiftB-0}. Moreover, 
\[
2\tau-\theta_1 = 2(S+5c)-(\lambda_1+c)  = 2S-\lambda_1+9c =  \frac{9\lambda_3-S-4\lambda_1}{4}\ge0.
\]
This proves the second claim in \eqref{lem-shiftB-0}. Finally, Perron dominance for $A$ gives $\lambda_1+\lambda_5\ge0$. We have also assumed $\lambda_2>0>\lambda_4$. It follows that 
\[
\theta_4 = \lambda_4+\frac{\lambda_3-S}{4} =\frac{3\lambda_4-\lambda_2-(\lambda_1+\lambda_5)}{4} \le \frac{3\lambda_4-\lambda_2}{4}<0. 
\]
This proves the third claim in \eqref{lem-shiftB-0}.\qed

\subsection{Proof of Lemma~\ref{lem:positive-strict}}\label{app:positive-strict}

\begin{proof}[Proof of Lemma~\ref{lem:positive-strict}]
Theorem~\ref{thm:LS-hightrace}  gives $\mu_3\le\tau_P$.  Suppose for contradiction that
$\mu_3=\tau_P$.

Since $P$ is strictly positive, it is primitive.  Perron--Frobenius therefore gives
\[
\mu_1>\mu_2.
\]
Choose an orthogonal matrix $Q$ such that
\[
P=Q\diag(\mu_1,\mu_2,\mu_3,\mu_4,\mu_5)Q\T.
\]
For $\eps>0$, set
\[
D_{\eps}=\diag(\mu_1,\mu_2+\eps,\mu_3+\eps,\mu_4-\eps,\mu_5-\eps),
\qquad
P_{\eps}=QD_{\eps}Q\T.
\]
Choose $\eps$ small enough that
\[
\mu_2+\eps<\mu_1.
\]
Then the entries of $D_{\eps}$ remain in nonincreasing order.  Also, by continuity, $P_{\eps}$ remains entrywise strictly positive for all sufficiently small $\eps>0$.

The perturbation preserves the trace:
\[
\tr(P_{\eps})=\tau_P.
\]
Its Perron root remains $\mu_1$, whereas its third eigenvalue is
\[
\lambda_3(P_{\eps})=\mu_3+\eps=\tau_P+\eps.
\]
Moreover $\tau_P\ge\mu_1/2$ is unchanged.  Applying  Theorem~\ref{thm:LS-hightrace}  to $P_{\eps}$ yields
\[
\tau_P+\eps=\lambda_3(P_{\eps})\le\tr(P_{\eps})=\tau_P,
\]
a contradiction.  Hence $\mu_3<\tau_P$.
\end{proof}

\subsection{Proof of Lemma~\ref{lem:certificate}}\label{app:certificate}

This subsection proves the complementary commuting matrix and all of its support conclusions.  Parts 1 and 2 establish the general statement that the support is either $C_5$ or $K_2\cup P_3$.  Part 3 treats the case $\theta_2>\theta_3$, and Part 4 treats the case $\theta_2=\theta_3=\tau$. 

\begin{proof}[Proof of Lemma~\ref{lem:certificate}]
The proof has four parts.  In Part 1, we construct a nonzero matrix $X$ satisfying \eqref{eq:certificate}.  The key point is first to show that an orthogonal perturbation cannot make all zero off-diagonal entries of $B$ increase at once; a separation theorem then produces $X$.  In Part 2, we classify the possible support graphs of $X$ and show that only $C_5$ or $K_2\cup P_3$ can remain.   In Part 3, we exclude $K_2\cup P_3$ when $\theta_2>\theta_3$.  In Part 4, we exclude $K_2\cup P_3$ when $\theta_2=\theta_3=\tau$.   We now consider Part 1.

\medskip
\noindent\textbf{Part 1: construction of $X$.}
Let
\[
E=\{\{i,j\}:1\le i<j\le5,\ B_{ij}=0\}
\]
be the set of off-diagonal zero positions of $B$.  If $E$ were empty, then $B$ would be entrywise strictly positive, and Lemma~\ref{lem:positive-strict} would give $\theta_3<\tr(B)$, contradicting $\theta_3=\tr(B)$.  Hence $E$ is nonempty.

Write $m=|E|$ and list the elements of $E$ as
\[
E=\{e_1,\ldots,e_m\},\qquad e_\ell=\{i_\ell,j_\ell\}.
\]
Let $\mathcal K$ be the real vector space of skew-symmetric $5\times5$ matrices.  Define
\[
L:\mathcal K\longrightarrow\mathbb R^m,
\qquad
L(K)_\ell=(BK-KB)_{i_\ell j_\ell},\quad 1\le\ell\le m.
\]
Thus $L(K)$ records the first-order changes, under an orthogonal perturbation, of precisely those off-diagonal entries of $B$ that are zero.  Let
\[
\mathcal O=(0,\infty)^m
\]
be the open positive orthant in $\mathbb R^m$.

We first prove
\begin{equation}\label{eq:disjoint}
L(\mathcal K)\cap\mathcal O=\varnothing.
\end{equation}
Suppose otherwise.  Then there is a skew-symmetric matrix $K\in\mathcal K$ such that
\[
(BK-KB)_{i_\ell j_\ell}>0,\qquad \ell=1,\ldots,m.
\]
Set
\[
Q_t=e^{tK},\qquad B_t=Q_t\T BQ_t=e^{-tK}Be^{tK}.
\]
Because $K\T=-K$,
\[
Q_t\T Q_t=e^{tK\T}e^{tK}=e^{-tK}e^{tK}=I,
\]
so $Q_t$ is orthogonal.  Differentiating $B_t=e^{-tK}Be^{tK}$ at $t=0$ gives
\[
\left.\frac{d}{dt}B_t\right|_{t=0}=BK-KB.
\]
Therefore, at each off-diagonal position where $B$ is zero,
\[
(B_t)_{i_\ell j_\ell}
=t(BK-KB)_{i_\ell j_\ell}+o(t)>0
\]
for all sufficiently small $t>0$.  Every other off-diagonal entry of $B$ is already positive, and every diagonal entry is strictly positive; by continuity, these entries remain positive for sufficiently small $t$.  Hence $B_t$ is entrywise strictly positive.

Since $B_t$ is orthogonally similar to $B$, it has the same eigenvalues and the same trace.  Thus
\[
\lambda_3(B_t)=\theta_3=\tau=\tr(B_t),
\qquad
\tau\ge\frac{\theta_1}{2}=\frac{\lambda_1(B_t)}2.
\]
This contradicts Lemma~\ref{lem:positive-strict}.  Hence \eqref{eq:disjoint} holds.

We now use \eqref{eq:disjoint} to construct $X$.  The set $L(\mathcal K)$ is a linear subspace of $\mathbb R^m$, while $\mathcal O$ is a nonempty open convex set disjoint from it.  By the finite-dimensional separation theorem, there exists a nonzero vector
\[
y=(y_1,\ldots,y_m)\in\mathbb R^m
\]
such that
\[
y\cdot v\le y\cdot z
\qquad(v\in L(\mathcal K),\ z\in\mathcal O).
\]
Because $L(\mathcal K)$ is a vector space, replacing $v$ by arbitrary positive and negative multiples shows that
\[
y\cdot v=0\qquad(v\in L(\mathcal K)).
\]
Taking $v=0$ gives $y\cdot z\ge0$ for every $z\in\mathcal O$.  Hence each $y_\ell\ge0$; otherwise one could make the corresponding positive coordinate of $z$ arbitrarily large and force $y\cdot z<0$.

Put the coordinate $y_\ell$ back into the corresponding zero position $e_\ell=\{i_\ell,j_\ell\}$ of $B$: define
\[
X_{i_\ell j_\ell}=X_{j_\ell i_\ell}=y_\ell,\qquad \ell=1,\ldots,m,
\]
and set all remaining entries of $X$, including the diagonal, equal to zero.  Then $X$ is symmetric, nonzero, entrywise nonnegative, and supported on the off-diagonal zeros of $B$.  In particular,
\[
X_{ii}=0,\qquad B_{ij}X_{ij}=0\quad(i\ne j).
\]
For every skew-symmetric $K$,
\begin{align*}
0
&=2y\cdot L(K)\\
&=\tr\!\bigl(X(BK-KB)\bigr)\\
&=\tr\!\bigl(K(XB-BX)\bigr).
\end{align*}
Set $C_0=XB-BX$.  Since $B$ and $X$ are symmetric, $C_0$ is skew-symmetric.  Taking $K=-C_0$ gives
\[
0=-\tr(C_0^2)=\tr(C_0\T C_0),
\]
so $C_0=0$.  Therefore $XB=BX$, completing Part 1.

\medskip
\noindent\textbf{Part 2: reduction of the support graph.}
Let $H$ be the positive off-diagonal support graph of $X$. We aim to show that $H$ is isomorphic to either $K_2\cup P_3$ or $C_5$.  

Let $p \in\mathbb{R}^5$ be the Perron eigenvector of the irreducible nonnegative matrix $B$.  Since $BX=XB$, for any eigenvector $v$ of $B$, $Xv$ is also an eigenvector of $B$ associated with the same eigenvalue. Consequently, when this eigenvalue has a multiplicity of $1$, $Xv\propto v$. Note that the Perron eigenspace of $B$ is one-dimensional. We immediately obtain that   
\begin{equation} \label{temp-equation}
Xp=\nu p, 
\end{equation}
for some real $\nu$.  Because $X\ge0$, $X\ne0$, and $p>0$, we have $\nu>0$. 
By definition, $\nu$ is no larger than the spectral radius of $X$. Meanwhile, consider $Y=D^{-1}XD$, where $D=\diag(p)$. Since $X$ and $Y$ are similar, they have the same spectral radius. Note that $Y{\bf 1}_5=D^{-1}XD{\bf 1}_5 = D^{-1}Xp=\nu D^{-1}p = \nu {\bf 1}_5$. This means that each row of $Y$ has a sum $\nu$. Since $Y$ is nonnegative, $\|Y\|_\infty=\nu$. As a result, the spectral radius of $Y$ is no larger than $\nu$. Combining the above arguments gives that
\begin{equation} \label{spectral-radius}
\mbox{$\nu$ is equal to the spectral radius of $X$}. 
\end{equation}

On each connected component of $H$, the principal submatrix of $X$ on that component is irreducible and has the corresponding positive part of $p$ as an eigenvector with eigenvalue $\nu$.  Hence every component has spectral radius $\nu$, and in particular there are no isolated vertices (because $X$ has zero diagonal entries).

If $H$ is disconnected, its component sizes must be $2+3$.  The two-vertex component is $K_2$.  On three vertices, a connected graph is either the path $P_3$ or the triangle $K_3$.  Thus
\[
H\cong K_2\cup P_3
\quad\text{or}\quad
H\cong K_2\cup K_3.
\]
Here the union means that the two displayed graphs are the two disconnected components.

We exclude $K_2\cup K_3$.  Relabel the vertices so that the $K_2$ component is on $\{1,2\}$ and the $K_3$ component is on $\{3,4,5\}$.  The principal $2\times2$ submatrix of $X$ on $\{1,2\}$ has one positive off-diagonal entry and has eigenvalues $\nu$ and $-\nu$.  The principal $3\times3$ submatrix on $\{3,4,5\}$ has all three off-diagonal entries positive.  It is primitive, so its Perron eigenvalue is $\nu$, and its other two eigenvalues have modulus strictly smaller than $\nu$.  Consequently, the two principal submatrices of $X$ have exactly one eigenvalue in common, namely $\nu$.

Because every edge of $H$ is a zero position of $B$, the principal $2\times2$ block of $B$ on $\{1,2\}$ is diagonal.  Commutation with the corresponding principal block of $X$ forces its two diagonal entries to be equal, so it is $dI_2$ for some $d>0$.  Similarly, the principal $3\times3$ block of $B$ on $\{3,4,5\}$ is diagonal, and commutation with the positive triangle block of $X$ forces all three diagonal entries to be equal, so it is $hI_3$ for some $h>0$.

At this point the $2\times3$ cross block of $B$ is still written in the standard Euclidean basis, and it need not be sparse.  Denote it by $C$. Let $U$ and $V$ be the corresponding $2\times2$ and $3\times3$ orthogonal matrices consisting of the eigenvectors of the two principal blocks of $X$, respectively. Define the $5\times 5$ orthogonal matrix $Q=\diag(U,V)$. Then, 
\begin{equation} \label{B-and-tB}
B = \begin{bmatrix}
dI_2 & C\\
C\T & h I_3
\end{bmatrix}, \qquad \widetilde{B}:= Q\T BQ = \begin{bmatrix}
dI_2 & \widetilde{C}\\
\widetilde{C}\T & h I_3
\end{bmatrix},
\end{equation}
where $\widetilde C=U\T C V$. Then, $\widetilde{B}$ is the representation of $B$ in the new orthonormal eigenbasis
defined by $Q$. Note that $\widetilde{B}$ and $B$ have the same eigenvalues. 

%Choose orthonormal eigenbases for the two principal blocks of $X$, and let $U$ and $V$ be the corresponding $2\times2$ and $3\times3$ orthogonal change-of-basis matrices.  In these eigenbases the cross block becomes $\widetilde C=U\T C V$.

We show that $\widetilde{C}$ must be sparse. 
Note that $u$ and $v$ are eigenvectors of $X$ with eigenvalues $\alpha$ and $\beta$, respectively, then
\[
\beta\,u\T Bv=u\T BXv=u\T X Bv=\alpha\,u\T Bv.
\]
Hence
\[
(\beta-\alpha)u\T Bv=0.
\]
Thus an off-diagonal entry of $\widetilde{B}$ can be nonzero only when the two corresponding eigenvalues of $X$ are equal.  
Since the two principal blocks of $X$ share only the eigenvalue $\nu$, the cross block $\widetilde C$ has only one possibly nonzero entry, corresponding to the two Perron eigenvectors of these two blocks. Without loss of generality, suppose in both $U$ and $V$ the first column corresponds to the Perron eigenvector. Then, $\widetilde{C}$ has the form:
\[
\widetilde C=
\begin{pmatrix}
 t&0&0\\
 0&0&0
\end{pmatrix},
\]
for some real $t$. If $t=0$, then $C=U\widetilde{C}V\T$ is a zero matrix, and $B$ is diagonal. This contradicts the irreducibility of $B$. Hence, $t\neq 0$.
%The irreducibility of $\widetilde{B}$ forces $t\ne0$; otherwise the cross block would vanish and $B$ would split into two blocks.
%Therefore, in the combined orthonormal eigenbasis of $X$, and after placing the two Perron eigenvectors first, the full matrix $B$ has the form

We plug $\widetilde{C}$ into \eqref{B-and-tB} and permute the rows and columns of $\widetilde{B}$ simultaneously so that $t$ becomes the $(1,2)$th entry. Denote by $\widetilde{B}_{\text{perm}}$ the resultant matrix. Then,
\[
\widetilde{B}_{\text{perm}} = 
\begin{pmatrix}
 d&t&0&0&0\\
 t&h&0&0&0\\
 0&0&d&0&0\\
 0&0&0&h&0\\
 0&0&0&0&h
\end{pmatrix}.
\]
%This displayed matrix is the representation of $B$ in the new orthonormal eigenbasis, not in the original standard basis.  
Let $r_+\ge r_-$ be the two eigenvalues of the upper-left $2\times2$ block of $\widetilde{B}_{\text{perm}}$.  Then, 
\[
\Spec(B)=\Spec(\widetilde{B}_{\text{perm}})=\{r_+,r_-,d,h,h\},
\qquad
\mbox{where}\quad r_++r_-=d+h.
\]
But
\[
\tau=\tr(B)=2d+3h.
\]
Since $d,h>0$, each of $d$, $h$, and $r_++r_-=d+h$ is strictly less than $\tau$.  Therefore, $r_+$ is the only possible eigenvalue of $B$ that is larger than or equal to $\tau$. However, this contradicts 
\[
\theta_1\ge\theta_2\ge\theta_3=\tau.
\]
Hence the only disconnected possibility is $K_2\cup P_3$.

Now assume $H$ is connected.  Since $B_{ij}X_{ij}=0$ and $B$ is irreducible, the positive off-diagonal support graph of $B$ is a connected spanning subgraph of the complement $\overline H$.  Hence $\overline H$ is connected as well. We call such $H$ a connected and co-connected graph. Let $m=|E(H)|$. Then, $|E(\overline H)|=10-m$. The connectivity of both $H$ and $\overline H$ implies that $m\geq 4$ and $10-m\geq 4$, which gives 
\[
4\le m\le6.
\]
Up to graph isomorphism, there are only 8 different connected co-connected graphs on 5 vertices (e.g., see \cite{kobata2016enumeration}). 
These eight graphs include the cycle $C_5$ and seven non-cycle graphs.  We now exclude those seven cases.

\begin{figure}[tb!]
\centering
\includegraphics[width=.7\textwidth]{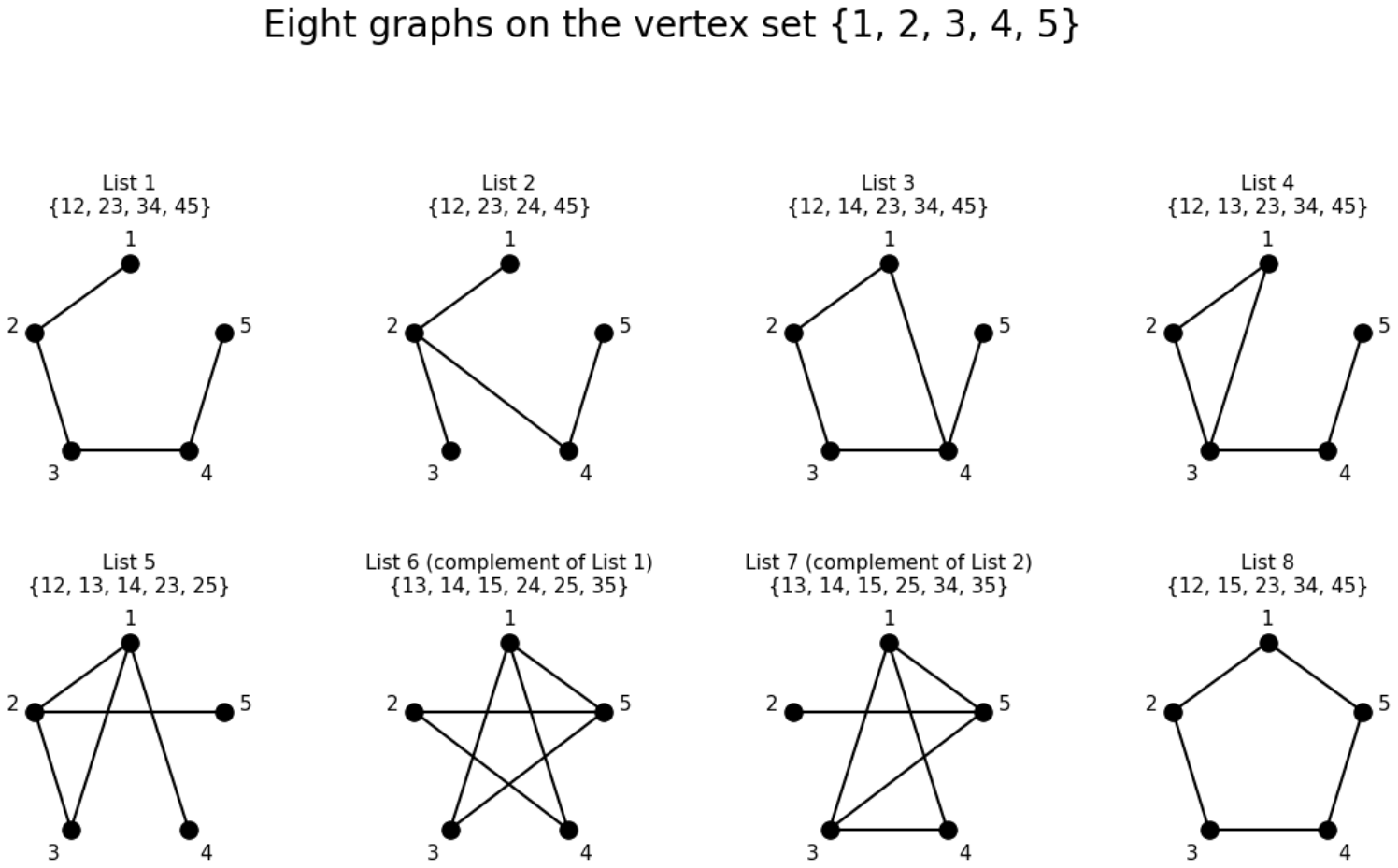}
\caption{Eight connected co-connected graphs.}
\end{figure}

First, consider the case in which $H$ is the path $E(H)=\{12,23,34,45\}$. It follows that
\[
X = 
\begin{pmatrix}
0&a&0&0&0\\
 a&0&b&0&0\\
 0&b&0&c&0\\
 0&0&c&0&d\\
 0&0&0&d&0
\end{pmatrix},
\]
for some positive $a,b,c,d$.  The spectrum of $X$ is simple,  i.e., each eigenvalue has a multiplicity of $1$. To see this,  for a fixed eigenvalue $\lambda$, $Xv=\lambda v$ defines a linear equation for $v=(v_1, v_2,\ldots, v_5)\T$. The form of $X$ ensures that once $v_1$ is given, then $v_2, \ldots, v_5$ can be solved from $v_1$ explicitly. Moreover, $v_1$ must be nonzero (otherwise, $v_2,\ldots,v_5$ will also be zero). This implies that $v$ has only one degree of freedom, i.e., the eigenspace associated with $\lambda$ is one-dimensional. 

 In deriving \eqref{temp-equation}, we have seen that the commutation relationship implies that each eigenvalue of $X$ associated with an eigenvalue with multiplicity $1$ must also be an eigenvector of $B$. Since all eigenvalues of $X$ have a multiplicity of $1$, the eigenvectors of $X$ are the same as those of $B$. Let $Q$ be the $5\times 5$ orthogonal matrix containing the eigenvectors of $X$ and $B$. Then, 
\[
B = Q\begin{pmatrix}\theta_1\\ & \ddots\\ && \theta_5\end{pmatrix} Q\T,  \qquad X = Q\begin{pmatrix}\mu_1\\ & \ddots\\ && \mu_5\end{pmatrix} Q\T, 
\]
where $\mu_1, \ldots, \mu_5$ are distinct. Using the Lagrange interpolation, there exists an order-4 polynomial $f(x)=\sum_{j=0}^4c_jx^j$ such that $\theta_i=f(\mu_i)$, for $i=1,2,\ldots,5$. It follows that
\[
B = f(X)=c_0I+c_1X+c_2X^2+c_3X^3+c_4X^4.
\]
If we view $X$ as the weighted adjacency matrix of a graph, then this graph is bipartite: edges only exist between two classes of nodes, $\{1,3,5\}$ and $\{2,4\}$. For each integer $k$, $(X^k)_{ij}$ is the edge sums of all length-$k$ walks from vertex $i$ to vertex $j$. When $i$ and $j$ belong to distinct classes, there exist no even-length walks between them. Hence, $(X^k)_{12}=(X^k)_{23}=0$, for even $k$. In addition, by direct calculations, 
\[
(X^3)_{12}=a(a^2+b^2) >0,
\qquad
(X^3)_{23}=b(a^2+b^2+c^2) >0.
\]
By the definition of the graph $H$, $X_{12}\neq 0$ and $X_{23}\neq 0$.
This further implies $B_{12}=B_{23}=0$. We plug the above results into $f(X)=B$ to obtain
\[
c_1a+c_3 a(a^2+b^2)=0,
\qquad
c_1b+c_3b(a^2+b^2+c^2)=0.
\]
Since $c\neq 0$, we immediately have $c_3=c_1=0$.  Therefore, $B=c_0I + c_2X^2+c_4X^4$, which is a polynomial of the bipartite matrix $X$ with only even-order terms. Then, for any $i$ and $j$ from the two different node classes, $B_{ij}=0$.  This implies that $B$ is blockwise diagonal with respect to the two blocks $\{1,3,5\}$ and $\{2,4\}$. It contradicts the irreducibility of $B$.

\begin{table}[tb!]
\renewcommand{\arraystretch}{1.25}
\caption{The calculations needed to rule out the five non-cycle graphs. In these calculations, we repeatedly use the facts that  $X_{ij}=0$ when $\{i,j\}\notin E(H)$ and $B_{ij}=0$ when $\{i,j\}\in E(H)$.} \label{tb:graphs-to-rule-out}
\begin{tabular}{p{0.32\textwidth}p{0.65\textwidth}}
\toprule
$E(H)$ & Consequence of $BX=XB$ \\
\midrule
$\{12,23,34,14,45\}$  
& Write $a=X_{12}>0$ and $z=B_{25}\ge0$. By calculations, $(BX)_{15}=0$ and $(XB)_{15}=az$. Then, $BX=XB$ implies $z=0$. The support of $B$ is thus contained in $\{13,15,24,35\}$.  Thus the positive off-diagonal support graph of $B$ is disconnected, contradicting the irreducibility of $B$.  \\
$\{12,23,13,34,45\}$ 
& Write $a=X_{12}$, $b=X_{23}$, $c=X_{13}$, $d=X_{34}$, $e=X_{45}$ and $u=B_{14}$, $v=B_{15}$, $w=B_{24}$, $z=B_{25}$, $t=B_{35}$, where $a, b,c,d,e$ are positive and $u,v,w,z,t$ are nonnegative. The equation $(BX)_{35}=(XB)_{35}$ implies $cv+bz=0$, hence $v=z=0$.  The equations $(BX)_{24}=(XB)_{24}$, $(BX)_{15}=(XB)_{15}$, and $(BX)_{25}=(XB)_{25}$ give $au=0$, $ct=0$, $we=0$, respectively; consequently $u=t=w=0$. Then, $B$ is diagonal, contradicting the irreducibility.\\
$\{12,23,13,14,25\}$ & Write $a=X_{12}$, $b=X_{23}$, $c=X_{13}$, $d=X_{14}$, $e=X_{25}$ and $u=B_{15}$, $v=B_{24}$, $w=B_{34}$, $t=B_{35}$, $s=B_{45}$, with positive $a, b,c,d,e$ and nonnegative  $u,v,w,t,s$. The equations $(BX)_{24}=(XB)_{24}$, $(BX)_{35}=(XB)_{35}$, and $(BX)_{45}=(XB)_{45}$ imply that $wb+se=0$, $uc=0$, $ve=du=0$.  Hence $w=s=u=v=0$.  Then, only $B_{35}$ can be nonzero in the upper triangle (excluding the diagonal), which contradicts the irreducibility of $B$.\\
Complement of $\{12,23,34,45\}$ & The support of $B$ is contained in the path $\{12,23,34,45\}$, so the irreducibility implies $B_{23}>0$. Moreover, $(BX)_{12}=0$. Since $\{13\}$ is in the complement of this path, $X_{13}>0$.  It follows that $(XB)_{12}\ge X_{13}B_{32}>0$, contradicting $(BX)_{12}=0$. \\
Complement of $\{12,23,24,45\}$ & The support of $B$ is contained in the tree $\{12,23,24,45\}$, so the irreducibility makes all four corresponding entries of $B$ positive. Moreover, $(BX)_{12}=0$. Since $\{13,14\}$ are contained in the complement of this tree, $X_{13}>0$ and $X_{14}>0$. It follows that $(XB)_{12}\ge X_{13}B_{32}+X_{14}B_{42}>0$, contradicting $(BX)_{12}=0$.\\
\bottomrule
\end{tabular}
\end{table}

Next,  consider the tree $T_5$ with $E(H)=\{12,23,24,45\}$. Then, 
\[
X = 
\begin{pmatrix}
0&a&0&0&0\\
 a&0&b&c&0\\
 0&b&0&0&0\\
 0&c&0&0&d\\
 0&0&0&d&0
\end{pmatrix},
\]
for positive weights $a,b,c,d$. Under the bipartition $\{2,5\}\cup\{1,3,4\}$, $X$ has four blocks, where the two diagonal blocks are zero, and the off-diagonal block of $X$ satisfies that 
\[
R=\begin{pmatrix}a&b&c\\0&0&d\end{pmatrix},
\qquad
RR\T=
\begin{pmatrix}
 a^2+b^2+c^2&cd\\
 cd&d^2
\end{pmatrix}.
\]
Then, $XX\T$ is block-wise diagonal, where the blocks corresponding to $\{2,5\}$ and $\{1,3,4\}$ are $RR\T$ and $R\T R$, respectively. Let $\alpha\geq 0$ and $\beta\geq 0$ be the two eigenvalues of $RR\T$, which are also the eigenvalues of $R\T R$. Then, the five eigenvalues of $X$ are $\{\pm \sqrt{\alpha}, \pm\sqrt{\beta}, 0\}$.
Note that the $2\times2$ matrix $RR\T$ is positive definite because its determinant is $d^2(a^2+b^2)>0$, and its two eigenvalues are distinct because $cd>0$ (if the two eigenvalues are equal, then the inverse of $RR\T$  is proportional to its transpose; using the inversion formula of $2\times 2$ matrices, this is impossible when $cd>0$).  Thus $X$ has five distinct eigenvalues, and so the spectrum of $X$ is simple. Similarly as in the first case, $B$ can be expressed as a polynomial of $X$ with degree at most 4:  $B=c_0I+c_1X+c_2X^2+c_3X^3+c_4X^4$. The graph $H$ is also bipartite with respect to $\{2,5\}$ and $\{1,3,4\}$. Then, $(X^k)_{12}=(X^k)_{24}=0$ for even $k$. We also observe that 
\[
X_{12}=a, \qquad (X^3)_{12}=a(a^2+b^2+c^2),
\qquad X_{24}=c,\qquad 
(X^3)_{24}=c(a^2+b^2+c^2+d^2).
\]
Since $B_{12}=B_{24}=0$, we again obtain $c_1=c_3=0$, so $B$ is a polynomial of $X$ with only even-order terms. Using similar arguments as before, $B$ is reducible. This yields a contradiction.

The remaining five non-cycle connected support graphs are ruled out directly from $BX=XB$.  The needed computations are summarized in Table~\ref{tb:graphs-to-rule-out}; every displayed variable corresponding to an edge of $X$ is positive, and every displayed entry of $B$ is nonnegative.

Therefore the only connected possibility is $C_5$.  This proves the support reduction.

\medskip

\noindent\textbf{Part 3: the case of $\theta_2>\theta_3$.}
We aim to show that $H\cong K_2\cup P_3$ is impossible when 
\begin{equation}\label{eq:strict-sign-B-copy}
\theta_1>\theta_2>\theta_3=\tr(B)>0>\theta_4\ge\theta_5, \qquad \theta_1\leq 2\tr(B).
\end{equation}
Hence, $H\cong C_5$ is the only possibility. 

Suppose $H\cong K_2\cup P_3$.  Without loss of generality, we assume that $K_2$ contains the nodes $4$ and $5$, and $P_3$ contains the remaining three nodes. Write
\[
a= X_{12} >0,
\qquad
b = X_{23}>0,
\qquad
c = X_{45} >0.
\]
Then, $X$ is blockwise diagonal. By elementary calculations, the spectral radius of the bottom right $2\times 2$ block is equal to $c$, and the spectral radius of the top left $3\times 3$ block is equal to $\sqrt{a^2+b^2}$. 
Meanwhile, in the text below \eqref{spectral-radius}, we have argued that the two blocks share the same spectral radius $\nu$. It follows that 
\[
\nu=c=\sqrt{a^2+b^2}.
\]
Set $
\alpha=a/\nu$ and $
\beta=b/\nu$. Then, $
\alpha^2+\beta^2=1$. 
The top left $3\times 3$ principal submatrix of $X$ has three eigenvalues $\nu,0,-\nu$, with the corresponding eigenvectors as
\begin{equation} \label{u}
u_+=\frac1{\sqrt2}(\alpha,1,\beta)\T,
\qquad
u_0=(\beta,0,-\alpha)\T,
\qquad
u_-=\frac1{\sqrt2}(-\alpha,1,-\beta)\T. 
\end{equation}
For the bottom $2\times 2$ principal sub-matrix of $X$, it has two eigenvalues $\pm \nu$, with the corresponding eigenvectors
\begin{equation} \label{v}
v_+=\frac1{\sqrt2}(1,1)\T,
\qquad
v_-=\frac1{\sqrt2}(1,-1)\T. 
\end{equation}
We can construct $5$-dimensional vectors from $u_{\pm}, u_0$ by adding zeros to the last two coordinates and denote them by $\tilde{u}_\pm, \tilde{u}_0$. Similarly, we construct $5$-dimensional vectors from $v_\pm$ by adding zeros to the first three coordinates and still denote them by $\tilde{v}_\pm$. Now, $\{\tilde{u}_{\pm}, \tilde{u}_0, \tilde{v}_{\pm}\}$ are the eigenvectors of $X$, and the corresponding eigenvalues take three distinct values. Therefore, $X$ has three eigenspaces:
\[
E_+=\operatorname{span}(\tilde{u}_+,\tilde{v}_+),
\qquad
E_0=\operatorname{span}(\tilde{u}_0),
\qquad
E_-=\operatorname{span}(\tilde{u}_-,\tilde{v}_-).
\]

Commutation between $B$ and $X$ implies that each of the above three eigenspaces of $X$ is invariant under $B$. The restriction of $B$ on $E_+$ is a matrix $B_+\in\mathbb{R}^{2\times 2}$ such that $B[\tilde{u}_+, \tilde{v}_+]=[\tilde{u}_+, \tilde{v}_+]B_+$, and the restrictions of $B$ on $E_-$ and $E_0$ are defined similarly. We parameterize these restrictions by
\[
B_+=\begin{pmatrix}d_+&q\\q&e_+\end{pmatrix},
\qquad
B_0 = [h],
\qquad
B_-=\begin{pmatrix}d_-&t\\t&e_-\end{pmatrix}.
\]
Let $A\in\mathbb{R}^{3\times 3}$ and $C\in\mathbb{R}^{2\times 2}$ be the two diagonal blocks of $B$ and let $R\in\mathbb{R}^{3\times 2}$ be the off-diagonal blocks. Then, $B[\tilde{u}_+, \tilde{v}_+]=[u_+, v_+]B_+$ becomes
\begin{equation} \label{restriction-equation}
\begin{bmatrix}
A & R\\ R\T & C
\end{bmatrix}
\begin{bmatrix}
u_+ & {\bf 0}\\ {\bf 0} & v_+
\end{bmatrix} = \begin{bmatrix}
u_+ & {\bf 0}\\ {\bf 0} & v_+
\end{bmatrix}\begin{pmatrix}d_+&q\\q&e_+\end{pmatrix}. 
\end{equation}
It gives $Au_+=d_+u_+$ and $Cv_+=e_+v_+$. We can similarly deduce that $Au_-=d_-u_-$, $Cv_-=e_-v_-$, and $Au_0=hu_0$. Therefore, $u_{\pm}$ and $u_0$ are eigenvectors of $A$, corresponding to eigenvalues $d_{\pm}$ and $h$; and $v_{\pm}$ are eigenvectors of $C$, corresponding to eigenvalues of $e_{\pm}$. Since $\{u_{\pm}, u_0\}$ already form an orthonormal basis, we immediately have:
\[
A = d_+u_+u\T_+ + hu_0u\T_0 + d_-u_-u\T_-.
\]
The support graph of $A$ is contained in the complement of the path $P_3$. Hence, $A_{12}=0$. Using \eqref{u}, we have $A_{12}=\frac{\alpha}{2}(d_+-d_-)$. Since $\alpha>0$, it implies that $d_+=d_-$. Similarly, $C=e_+ v_+ v_+\T+ e_- v_- v_-\T$, and we can use $C_{12}=0$ to show that $e_+=e_-$. It follows that
\[
B_+=\begin{pmatrix}d &q\\q&e\end{pmatrix},
\qquad
B_0 = [h],
\qquad
B_-=\begin{pmatrix}d&t\\t&e\end{pmatrix}, 
\]
and the two diagonal blocks of $B$ are equal to 
\[
A = d (u_+u\T_+ +  u_-u\T_-) + hu_0u\T_0=dI_3 + (h-d)u_0u_0\T, \qquad 
C = e (v_+ v_+\T+ v_- v_-\T)= eI_2.
\]
In particular, $A_{13}=\alpha \beta(d-h)$, so non-negativity gives 
\begin{equation}\label{eq:dgeqh}
d\ge h.
\end{equation}

For the off-diagonal block $R$, the equation \eqref{restriction-equation} implies that $
Rv_+=qu_+$ and $R\T u_+ = qv_+$. As a result, $u_+$ is an eigenvector of $RR\T$, and $v_+$ is an eigenvector of $R\T R$, and their corresponding eigenvalues are both $q^2$. Similarly, $u_-$ and $v_-$ are the eigenvectors of $RR\T$ and $R\T R$, respectively, associated with the eigenvalue $t^2$.  Since $\{u_+,u_0,u_-\}$ is an orthonormal basis of $\mathbb R^3$, $\{v_+,v_-\}$ is an orthonormal basis of $\mathbb R^2$, and invariance of $E_0$ gives $R\T u_0=0$, we obtain 
\[
R = q u_+v_+\T + t u_-v_-\T = \frac{1}{2}\begin{pmatrix} \alpha(q-t) & \alpha(q+t)\\ q+t & q-t \\ \beta(q-t) & \beta(q+t)\end{pmatrix}.
\]
Entrywise non-negativity of $R$ yields $q\ge|t|$. Note that the eigenvalues of $B_+$ are equal to the two eigenvalues associated with the two eigenvectors in the span of $\tilde{u}_+$ and $\tilde{v}_+$, and the eigenvalues of $B_-$ are equal to the two eigenvalues associated with the two eigenvectors in the span of $\tilde{u}_-$ and $\tilde{v}_-$. The Perron eigenvalue $\theta_1$ must appear in $B_+$. 
 
However, when $q=|t|$,  $B_+$ and $B_-$ have the same characteristic polynomial, so that $\theta_1$ will also appear as an eigenvalue of $B_-$. This contradicts the fact that the multiplicity of $\theta_1$ must be $1$. Therefore, 
\[
q>|t|.
\]
Let $p_+>p_-$ be the eigenvalues of $B_+$ and $m_+\ge m_-$ the eigenvalues of $B_-$.  Note that $B_{\pm}$ share the same diagonal entries, and the off-diagonal entries satisfy that $q>|t|$. 
By the explicit formula for eigenvalues of $2\times2$ symmetric matrices, we obtain that 
\begin{equation}\label{eq:block-order}
p_+>m_+\ge m_->p_-.
\end{equation}
These four values, together with $h$, constitute all eigenvalues of $B$.
Note that $p_+=\theta_1$.

If $p_-\ge0$, then $B$ has at least four nonnegative eigenvalues, contradicting \eqref{eq:strict-sign-B-copy}.  Hence $p_-<0$.

If $p_-=\theta_4>\theta_5$, then \eqref{eq:block-order} forces
\[
\{m_+,m_-\}=\{\theta_2,\theta_3\},
\qquad h=\theta_5.
\]
Since $B_+$ and $B_-$ have the same trace $d+e$,
\[
\theta_1+\theta_4=\theta_2+\theta_3.
\]
But $\theta_1\le2\theta_3$ and $\theta_2>\theta_3$, so
\[
\theta_4=\theta_2+\theta_3-\theta_1>0,
\]
which gives a contradiction to \eqref{eq:strict-sign-B-copy}.

Thus $p_-=\theta_5$ (including the case $\theta_4=\theta_5$).  If
\[
\{m_+,m_-\}=\{\theta_2,\theta_3\}, \qquad h = \theta_4, 
\]
then the trace equality gives $\theta_2+\theta_3=\theta_1+\theta_5$. Again, since $\theta_1\le2\theta_3$ and $\theta_2>\theta_3$, we have
\[
\theta_5=\theta_2+\theta_3-\theta_1>0. 
\]
 This contradicts $\theta_5<0$ in \eqref{eq:strict-sign-B-copy}.  If
\[
\{m_+,m_-\}=\{\theta_2,\theta_4\},
\qquad h=\theta_3,
\]
then the trace equality gives $\theta_2+\theta_4=\theta_1+\theta_5$. Moreover, $\tr(B)=\theta_3$  in \eqref{eq:strict-sign-B-copy} gives $\theta_1+\theta_2+\theta_4+\theta_5=0$. Combining these two equalities gives 
\[
\theta_1+\theta_5=0, 
\]
contradicting primitivity of $B$ (because a primitive nonnegative matrix has no other eigenvalue of modulus equal to its Perron root).

The only remaining possibility is
\[
\{m_+,m_-\}=\{\theta_3,\theta_4\},
\qquad h=\theta_2.
\]
But $d$ is a diagonal entry of $B_-$ and therefore lies between its two eigenvalues $\theta_3$ and $\theta_4$, so $d\le\theta_3$.  This contradicts \eqref{eq:dgeqh}, because $h=\theta_2>\theta_3$.  Thus $H\cong K_2\cup P_3$ is impossible.

\medskip
\noindent\textbf{Part 4: the case of $\theta_2=\theta_3$.}
We aim to show that $H\cong K_2\cup P_3$ is impossible when  
\begin{equation} \label{eq:repeated-sign-B-copy}
\theta_1>\theta_2=\theta_3=\tr(B)>0>\theta_4\ge\theta_5, \qquad \theta_1\leq 2\tr(B). 
\end{equation}
%Write
%\[
%\eta=\theta_4,\qquad \zeta=\theta_5,
%\]
%so that $0>\eta\ge\zeta$.

Suppose $H\cong K_2\cup P_3$. The arguments in Part 3 are still valid up until \eqref{eq:block-order}.
Now, if $p_-\geq 0$, then $B$ has at least four nonnegative eigenvalues, which  contradicts \eqref{eq:repeated-sign-B-copy}. Hence, $p_-<0$. This means that only $p_-=\theta_4$ or $p_-=\theta_5$ are possible. 

If $p_-=\theta_4>\theta_5$, then $\{m_+, m_-\}=\{\theta_2, \theta_3\}$. Under \eqref{eq:repeated-sign-B-copy}, it forces
\[
m_+= m_-=\theta_3, \qquad h = \theta_5. 
\]
Note that $B_+$ and $B_-$ have the same trace. Hence, $2\theta_3 = \theta_1 + \theta_4$. Since $\theta_1\leq 2\tr(B)=2\theta_3$, it must hold that $\theta_4\geq 0$, which contradicts \eqref{eq:repeated-sign-B-copy}. 
Thus, $p_-=\theta_5$ is the only possible case. Then, $m_+, m_-, h$ take the three values $\theta_3, \theta_3, \theta_4$. If 
\[
m_+=m_-=\theta_3,  \qquad h = \theta_4, 
\]
then the trace equality implies $2\tr(B)=\theta_1+\theta_5$. This contradicts $\theta_1\leq 2\tr(B)=2\theta_3$ and $\theta_5<0$ in \eqref{eq:repeated-sign-B-copy}. If
\[
m_+=\theta_3, \qquad m_-=\theta_4, \qquad h = \theta_3. 
\]
The trace equality gives $\theta_3+\theta_4=\theta_1+\theta_5$. Meanwhile, $\tr(B)=\theta_3$ yields that $\theta_1+\theta_3+\theta_4+\theta_5=0$. We combine them to obtain
\[
\theta_1+\theta_5=0, 
\]
contradicting primitivity of $B$. We have ruled out all possibilities, indicating that $H\cong K_2\cup P_3$ is impossible. 
\end{proof}

\subsection{Proof of Lemma~\ref{lem:cycle-rep}}
We use the cyclic relabeling for both $B$ and $X$. From now on, write $B=(B_{i,i+k})$ for $i\in \mathbb Z/5\mathbb Z$ and $k\in \{-2, -1, 0, 1, 2\}$. Here, the indices should be read modulo $5$, i.e., $B_{1,6}=B_{1,1}$ and $B_{3,7}=B_{3,2}$. In addition, by symmetry, $B_{i, i+3}=B_{i, i-2}=B_{i-2,i}$ and $B_{i, i+4}=B_{i, i-1}=B_{i-1,i}$. As a result, to describe the full matrix $B$ (or $X$), we only need to specify 
\[
B_{i, i}, B_{i, i+1}, B_{i, i+2} \mbox{ (or $X_{i,i}, X_{i, i+1}, X_{i, i+2}$)} \qquad\mbox{for }i\in \mathbb Z/5\mathbb Z. 
\]
Note that $X$ has zero diagonal and is supported on $C_5$. Hence, $X_{i, i}=X_{i, i+2}= 0$ and $X_{i,i+1}>0$ under the cyclic relabeling. Furthermore, because $X_{i,i+1}B_{i,i+1}=0$, we have $B_{i, i+1}=0$ under the cyclic relabeling. Given these observations, we let
\[
X_{i,i+1}=u_i>0, 
\qquad B_{i,i}=d_i>0, \qquad B_{i, i+2}=v_i\geq 0, \qquad 
\mbox{for}\quad i\in\mathbb Z/5\mathbb Z.
\]

We first use the $(i,i+2)$ entries of $BX=XB$. Using the facts that  $X_{i,i+k}=0$ for $k\in\{-2,0,2\}$  and $B_{i,i+1}=B_{i,i-1}=0$, we have the following calculations:
\begin{align*}
(BX)_{i,i+2}& =\sum_{k=-2}^2 B_{i, i+k}X_{i+k,i+2}= B_{i,i-2}X_{i-2,i+2}=B_{i, i-2}X_{i+3,i+2}=v_{i-2}u_{i+2},\cr
(XB)_{i,i+2}& =\sum_{k=-2}^2 X_{i, i+k}B_{i+k,i+2}= X_{i,i-1}B_{i-1,i+2}=X_{i,i-1}B_{i+4,i+2} = u_{i-1}v_{i+2}.
\end{align*}
It yields  $v_{i-2}u_{i+2}=u_{i-1}v_{i+2}$. Since each $u_j>0$, dividing both sides  by $u_{i-2}u_{i-1}u_{i+2}$ and utilizing $u_{i-2}=u_{i+3}$ (because the indices are read modulo 5)  gives 
\[
\frac{v_{i-2}}{u_{i-2}u_{i-1}}
=
\frac{v_{i+2}}{u_{i+2}u_{i+3}}.
\]
As $i$ varies modulo $5$, these relations show that the value of $\frac{v_i}{u_i u_{i+1}}$ does not depend on $i$. Let $b\geq 0$ denote this value. Then, 
\begin{equation}\label{eq:vi-b}
v_i=b u_i u_{i+1}, \qquad \mbox{for all }i\in\mathbb Z/5\mathbb Z. 
\end{equation}

Next use the $(i,i+1)$ entries of $BX=XB$.  By direct calculations, 
\begin{align*}
(BX)_{i,i+1}& =B_{i,i}X_{i,i+1}+ B_{i,i+2}X_{i+2,i+1} =d_i u_i + v_i u_{i+1},\cr
(XB)_{i,i+1}& = X_{i,i-1}B_{i-1,i+1} + X_{i, i+1}B_{i+1, i+2} = u_{i-1}v_{i-1}+u_i d_{i+1}.
\end{align*}
This  gives $d_i u_i+v_i u_{i+1}=u_{i-1}v_{i-1}+u_i d_{i+1}$. 
Substituting \eqref{eq:vi-b} and dividing by $u_i>0$ yields
\[
d_i+b u_{i+1}^2
=d_{i+1}+b u_{i-1}^2.
\]
Equivalently,
\[
d_i-b(u_{i-1}^2+u_i^2)
=
d_{i+1}-b(u_i^2+u_{i+1}^2).
\]
Thus there is a constant $a\in\mathbb R$ such that
\begin{equation}\label{eq:di-a}
d_i=a+b(u_{i-1}^2+u_i^2)
\qquad\text{for all } i\in\mathbb Z/5\mathbb Z. 
\end{equation}

We now show the claim. Note that by direct calculations, 
\[
(X^2)_{ii}=u_{i-1}^2+u_i^2,
\qquad
(X^2)_{i,i+2}=u_i u_{i+1},
\qquad
(X^2)_{i,i+1}=0.
\]
Together with \eqref{eq:vi-b} and \eqref{eq:di-a}, these identities give
\[
B=aI_5+bX^2.
\]
If $b=0$, then $B=aI_5$, which is reducible, contradicting the irreducibility assumption on $B$.  Hence $b>0$.
Moreover, $v_i=b u_i u_{i+1}>0$ for every $i$, so all five off-diagonal edges complementary to the support cycle of $X$ occur in the positive off-diagonal support of $B$.  The two positive off-diagonal support graphs are therefore complementary copies of $C_5$. \qed

\subsection{Proof of Lemma~\ref{lem:C5poly}}\label{app:C5poly}
\noindent Consider claim (a).  Write 
\[
tI_5-X=
\begin{pmatrix}
t&-y_1&0&0&-y_5\\
-y_1&t&-y_2&0&0\\
0&-y_2&t&-y_3&0\\
0&0&-y_3&t&-y_4\\
-y_5&0&0&-y_4&t
\end{pmatrix}.
\]
By the Leibniz formula, $\det(tI_5-X)$ is a sum of terms indexed by the
permutations $\sigma\in S_5$, where $S_5$ denotes the set of all
permutations of $\{1,2,3,4,5\}$.  More precisely,
\[
\det(tI_5-X)
=\sum_{\sigma\in S_5}\operatorname{sgn}(\sigma)
\prod_{i=1}^5 (tI_5-X)_{i,\sigma(i)}.
\]
Now, for each $\sigma$, the associated term  can be nonzero only when every selected entry
$(tI_5-X)_{i,\sigma(i)}$ is nonzero.  If $\sigma$ contains a cycle of
length $3$, the three vertices in that cycle would have to form a triangle
in the support graph of $X$, so the corresponding term is zero.  Similarly,
a cycle of length $4$ would require a four-cycle in the support graph and
also gives zero.  Hence it remains to consider only the identity
permutation, one transposition, two disjoint transpositions, and a cycle of
length $5$.

The identity permutation selects all five diagonal entries and contributes
$t^5$.  A nonzero transposition must exchange the endpoints of one of the
five edges.  For the edge with weight $y_i$, its sign is $-1$, the two
off-diagonal entries have product $y_i^2$, and the remaining three
diagonal entries contribute $t^3$.  Summing these five terms gives
$-\left(\sum_{i=1}^5y_i^2\right)t^3$.

For two disjoint transpositions, the two edges must have weights $y_i$
and $y_{i+2}$ for some $i$, with indices modulo $5$.  The sign is $+1$,
the four off-diagonal entries have product $y_i^2y_{i+2}^2$, and the
remaining diagonal entry contributes $t$.  Summing the five possible
choices gives $\left(\sum_{i=1}^5y_i^2y_{i+2}^2\right)t$.

Finally, a nonzero cycle of length $5$ must follow one of the two
orientations of the five-cycle.  Each permutation has sign $+1$, while
the product of the five selected entries is
$-\prod_{i=1}^5y_i$.  The two orientations therefore contribute
$-2\prod_{i=1}^5y_i$.  Combining these contributions gives
\[
\det(tI_5-X)=t^5-\left(\sum_{i=1}^5y_i^2\right)t^3+\left(\sum_{i=1}^5y_i^2y_{i+2}^2\right)t-2\prod_{i=1}^5y_i.
\]
This proves claim (a). 

\medskip
\noindent Consider claim (b).  Setting $t=0$ in claim (a) gives
$p_X(0)=\det(-X)=-2\prod_{i=1}^5y_i$.  Since $X$ has size $5$,
$\det(-X)=-\det(X)$, so $\det(X)=2\prod_{i=1}^5y_i>0$ and $X$ is
nonsingular. Now, delete the fifth row and column of $X$ and write the resulting principal
submatrix as
\[
Y=
\begin{pmatrix}
0&y_1&0&0\\
y_1&0&y_2&0\\
0&y_2&0&y_3\\
0&0&y_3&0
\end{pmatrix}.
\]
Let $\Delta_k$ denote the determinant of the leading $k\times k$ block of
$Y$, with $\Delta_0=1$.  The last row and column of this tridiagonal block
have only the off-diagonal entry $y_{k-1}$, so expansion along them gives
$\Delta_k=-y_{k-1}^2\Delta_{k-2}$ for $k\ge2$.  Thus
$\Delta_1=0$, $\Delta_2=-y_1^2$, $\Delta_3=0$, and
$\det(Y)=\Delta_4=-y_3^2\Delta_2=y_1^2y_3^2>0$; in particular, $Y$ has
no zero eigenvalue.  For $D=\diag(1,-1,1,-1)$, direct multiplication gives
$DYD=-Y$.
Since $D^2=I_4$, this is equivalent to $YD=-DY$.  Thus
$Yv=\nu v$ implies $Y(Dv)=-\nu(Dv)$, so the eigenvalues of $Y$ occur in
opposite pairs.  As $Y$ is real symmetric, they can be ordered as
$\nu_1\ge\nu_2>0>\nu_3\ge\nu_4$.

Order the eigenvalues of $X$ as
$\mu_1\ge\mu_2\ge\mu_3\ge\mu_4\ge\mu_5$.
Cauchy's interlacing theorem for the principal submatrix $Y$ gives
\[
\mu_1\ge\nu_1\ge\mu_2\ge\nu_2\ge\mu_3
\ge\nu_3\ge\mu_4\ge\nu_4\ge\mu_5.
\]
Because $\nu_2>0$ and $\nu_4<0$, this implies
$\mu_1,\mu_2>0$ and $\mu_4,\mu_5<0$.  Since $X$ is nonsingular,
$\mu_3\ne0$, and the positivity of
$\det(X)=\mu_1\mu_2\mu_3\mu_4\mu_5$ forces $\mu_3>0$.  Thus $X$ has
exactly three positive and two negative eigenvalues. This proves claim (b). 

\medskip
\noindent Consider claim (c).  Suppose that $p_X(r)=0$.  Since
$p_X(0)=-2\prod_{i=1}^5y_i\ne0$, we have $r\ne0$.  By
\eqref{eq:C5poly}, all nonconstant terms of $p_X(t)$ are odd powers of
$t$.  Hence
\[
p_X(r)+p_X(-r)=-4\prod_{i=1}^5y_i<0.
\]
Using $p_X(r)=0$, we obtain
$p_X(-r)=-4\prod_{i=1}^5y_i\ne0$.  Thus $-r$ is not an eigenvalue of
$X$. This proves claim  (c).  

\medskip
\noindent Consider claim (d).  Factoring the characteristic polynomial into
linear factors gives
\[
\begin{aligned}
p_X(t)
&=\prod_{j=1}^5(t-\mu_j)\\
&=t^5-\left(\sum_{j=1}^5\mu_j\right)t^4
+\left(\sum_{1\le i<j\le5}\mu_i\mu_j\right)t^3\\
&\quad-\left(\sum_{1\le i<j<k\le5}\mu_i\mu_j\mu_k\right)t^2\\
&\quad+\left(\sum_{1\le i<j<k<\ell\le5}
\mu_i\mu_j\mu_k\mu_\ell\right)t
-\prod_{j=1}^5\mu_j.
\end{aligned}
\]
Comparing coefficients with \eqref{eq:C5poly} gives
\[
c_1(\mu)=0,\qquad
\sum_{1\le i<j\le5}\mu_i\mu_j=-\sum_{i=1}^5y_i^2,\qquad
\sum_{1\le i<j<k\le5}\mu_i\mu_j\mu_k=0,
\]
and \(\prod_{j=1}^5\mu_j=2\prod_{i=1}^5y_i\).  Since
\[
c_1(\mu)^2=c_2(\mu)+2\sum_{1\le i<j\le5}\mu_i\mu_j,
\]
the first two identities give
\(c_2(\mu)=2\sum_{i=1}^5y_i^2\).  Newton's third identity is
\[
c_3(\mu)-c_1(\mu)c_2(\mu)
+c_1(\mu)\sum_{1\le i<j\le5}\mu_i\mu_j
-3\sum_{1\le i<j<k\le5}\mu_i\mu_j\mu_k=0.
\]
Using \(c_1(\mu)=0\) and
\(\sum_{i<j<k}\mu_i\mu_j\mu_k=0\), we obtain \(c_3(\mu)=0\).
This proves claim (d).
\qed

\bibliographystyle{plainnat}
% REVISION: updated to match the supplied bibliography filename references(5).bib.
\bibliography{references}
\end{document}